\title{\bfseries Uniform generation of infinite concurrent runs:\\
the case of trace monoids}
\author[1]{Samy Abbes\thanks{\texttt{samy.abbes@univ-paris-diderot.fr}}}
\author[2]{Vincent Jugé\thanks{\texttt{vincent.juge@u-pem.fr}}}
\affil[1]{Université Paris Diderot/IRIF CNRS UMR 8243}
\affil[2]{Université Paris Est/LIGM CNRS UMR 8049}
\date{Sept. 2017}
\newtheorem{proposition}{Proposition}[section]
\newtheorem{lemma}[proposition]{Lemma}
\newtheorem{theorem}[proposition]{Theorem}
\newtheorem*{proprime}{Proposition~\ref{prop:2}'}
\theoremstyle{definition}
\newtheorem{definition}[proposition]{Definition}
\newcommand{\seq}[2]{(#1_{#2})_{#2\geqslant0}}
\newcommand{\M}{\mathcal{M}}
\newcommand{\tq}{\;:\;}
\newcommand{\unit}{\mathbf{e}}
\newcommand{\R}{\mathcal{R}}
\DeclareDocumentCommand{\size}{ m g }{%
  {\IfNoValueT{#2}{|#1|}\IfNoValueF{#2}{|#1|_{#2}}}%
}
\newcommand{\C}{\mathscr{C}}
\newcommand{\ZZ}{\mathbb{Z}}
\newcommand{\RR}{\mathbb{R}}
\newcommand{\U}{\mathsf{U}}
\newcommand{\G}{G}
\newcommand{\myleq}{\preccurlyeq}
\renewcommand{\Up}[1]{\Uparrow #1}
\newcommand{\up}[1]{\uparrow #1}
\newcommand{\esp}{\mathbb{E}}
\newcommand{\BM}{\partial\M}
\newcommand{\Mbar}{\overline\M}
\newcommand{\Pyr}[1]{\text{\normalfont\sffamily Pyr}_{\Sigma}(a_1)}
\newcommand{\Lk}{\mathscr{L}}
\newcommand{\A}{\mathcal{A}}
\newcommand{\B}{\mathbf{B}}
\newcommand{\iid}{{\normalfont\textsl{i.i.d.}}}
\begin{document}

\maketitle

\begin{abstract}
  We introduce an algorithm for the uniform generation of infinite runs in concurrent systems under a partial order probabilistic semantics. We work with trace monoids as concurrency models. The algorithm outputs on-the-fly approximations of a theoretical infinite run, the latter being distributed according to the exact uniform probability measure. The average size of the approximation grows linearly with the time of execution of the algorithm. The execution of the algorithm only involves distributed computations, provided that some---costly---precomputations have been done.
\end{abstract}

\section{Introduction}
\label{sec:introduction}

Random generation is the theoretical basis for the simulation of systems. For a system that one wishes to simulate, one considers, for a large integer~$n$, the set $E_n$ of its \emph{partial executions}---also called \emph{trajectories}---of length~$n$, which is a finite set in all considered cases. Several techniques have been introduced for the generation of elements of combinatorial structures~\cite{kulkarny90,flajolet94,duchon04:_boltz}. One drawback of these methods, however, is the following: assuming that the random generation has been done for trajectories of some length, it is in most models entirely useless for the random generation of trajectories of a larger size. 
Nevertheless, this simulation task of long executions makes only sense when trying to simulate systems for which the execution time is unbounded. This motivates us to seek direct methods for the random generation of infinite runs of systems.

In this paper, we consider \emph{trace monoids}~\cite{cartier69,diekert95}. A trace monoid is a finitely presented monoid where the relations are only  of the form $ab=ba$ for some pairs $(a,b)$ of generators---trace monoids correspond thus to right-angled Artin-Tits monoids~\cite{davis07}. The minimal generators of the monoid represent the elementary actions of a system; and the imposed commutation relations between generators stand for the concurrency between some elementary actions. The elements of the trace monoid, called \emph{traces}, represent the partial executions of a system featuring concurrency, and the concatenation stands for the ability of chaining several partial executions. 

In order to cope with unbounded executions of a system, elements of the trace monoid are not enough anymore. Fortunately, for trace monoids, there is a well defined space of \emph{infinite traces}---any infinite trace is the limit of a non decreasing sequence of traces. The space of infinite traces is called the \emph{boundary at infinity} of the monoid, as it is reminiscent of the analogous notion in Group theory~\cite{ghys90}. The boundary at infinity is equipped with a probability measure defined as the weak limit of a sequence of finite uniform probability distributions~\cite{abbes15}. This probability measure is called the \emph{uniform measure at infinity} and captures a notion of infinite runs uniformly distributed.

In this paper, we introduce new techniques for the uniform generation of infinite traces. This problem was solved by the first author for some particular trace monoids~\cite{abbes17}. Here we extend these results through new techniques that apply to any trace monoid. What we mean by an algorithm generating infinite traces is an algorithm that runs endlessly, and outputs a random non decreasing sequence of traces~$(\xi_n)_{n\geqslant0}$\,. The sequence $(\xi_n)_{n\geqslant0}$ approximates an infinite trace~$\xi$, which is distributed according to the \emph{exact} uniform measure at infinity. The average size of the output element $\xi_n$ increases linearly with~$n$, and the algorithm only involves distributed computations---which means in particular that the computations can be effectively processed in parallel. 

There are two steps to achieve our goal. First, we study the random generation of elements of the monoid. To this aim, we consider a family of discrete probability distributions which correspond both to the Poincar\'e series and associated sums of Dirac distributions introduced in Group theory, and to the probability distributions used for Boltzmann sampling in Combinatorics. We introduce an iterative method for the random generation, where the iteration runs on the size of the alphabet of generators of the monoid. Secondly, we use a reconstruction result for the uniform probability measure at infinity---this result is proved elsewhere---to derive from the first step a random generation method for infinite traces.

\paragraph*{Challenges.}
\label{sec:challenges}

Some challenges related to the uniform generation of infinite traces, although hidden in the core of the paper, deserve to be made explicit as they cannot be solved in an obvious manner. For each non negative integer~$n$, let $m_n$ be the finite uniform probability distribution on the set $E_n$ of elements of length~$n$ in the monoid. The uniform measure at infinity $\B$ is defined as the weak limit, when $n\to\infty$, of the sequence~$(m_n)_{n\geqslant0}$\,. For this definition to make sense, we have to embed the monoid into a suitable compactification, and this is the role of the boundary at infinity of the monoid. Hence $\B$ is a mathematical object that idealizes what ``a large element of the monoid uniformly distributed'' can be---within the paper, we use an alternative but equivalent definition for~$\B$. Now the point is to notice that the definition of a probability measure as a weak limit has no obvious operational counterpart from the uniform generation point of view. This general difficulty is illustrated in our context as follows: how can a ``boundary'' element be uniformly generated by making choices within a finite horizon? The mere definition of the uniform measure at infinity does not provide the answer. To overcome this difficulty, we need to use a reconstruction result for the uniform measure at infinity. This result provides an alternative description of~$\B$. Thanks to this result, one constructs a random infinite element by concatenating an \iid\ sample of finite, but of unbounded length, random elements of the monoid.

Another difficulty is to obtain, as we do, a random generation procedure which can be performed in a \emph{distributed way}. We give some details on this issue in the last section of the paper.

\paragraph*{Outline.}
\label{sec:outline}

Section~\ref{sec:comb-trace-mono} introduces the essential definitions for trace monoids and recalls combinatorial results. Section~\ref{sec:uniform-measures} introduces probabilistic material, in particular a family of discrete probability distributions attached to each trace monoid and the uniform measure on the space of infinite traces. In Section~\ref{sec:rand-gener-finite}, we introduce a generation algorithm for finite traces. In Section~\ref{sec:rand-gener-infin}, we show how to derive from the results of Section~\ref{sec:rand-gener-finite} an algorithm  which generates uniformly distributed infinite traces. Finally, Section~\ref{sec:distr-impl-algor} discusses the distributed features of the previously introduced algorithms. 

\section{Combinatorics of trace monoids}
\label{sec:comb-trace-mono}

Let $\Sigma$ be a finite set---the alphabet---, and let $\R$ be a reflexive and symmetric binary relation on~$\Sigma$. The \emph{trace monoid} $\M(\Sigma,\R)$ is the presented monoid $\langle\Sigma\;\big|\; ab=ba\text{ for $(a,b)\notin \R$}\rangle$. Its elements are thus equivalence classes of $\Sigma$-words, with respect to the reflexive and transitive closure of the elementary relations $xaby\sim xbay$ for $x,y\in\Sigma^*$ and for $(a,b)\notin \R$. We denote by $\unit$ the neutral element, and we denote by ``$\cdot$'' the concatenation in~$\M(\Sigma,\R)$.

If $\Sigma'$ is a subset of~$\Sigma$, we denote by $\M_{\Sigma'}$ the sub-monoid of $\M(\Sigma,\R)$ generated by~$\Sigma'$. Equivalently, $\M_{\Sigma'}$ is the trace monoid $\M(\Sigma',\R')$, where $\R'=\R\cap(\Sigma'\times\Sigma')$ is the restriction of $\R$ to~$\Sigma'$. In particular $\M(\Sigma,\R)=\M_\Sigma$\,, and our notation amounts to having the relation $\R$ understood.

It is known that elements of $\M_\Sigma$ can be represented by \emph{heaps}~\cite{viennot86}, according to a bijective correspondence which we briefly recall now, following the presentation of~\cite{krattenthaler06}. A \emph{heap}---see Figure~\ref{fig:pqjwdpoqjw},~(i)---is a triple $(P,\myleq,\ell)$, where $(P,\myleq)$ is a poset and $\ell:P\to\Sigma$ is a labeling of $P$ by elements of~$\Sigma$, satisfying the two following properties:
\begin{enumerate}
\item\label{item:1} If $x,y\in P$ are such that $\ell(x) \R\, \ell(y)$, then $x\myleq y$ or $y\myleq x$.
\item The relation $\myleq$ is the transitive closure of the relations from~\ref{item:1}.
\end{enumerate}
More precisely, the heap is the equivalence class of $(P,\myleq,\ell)$, up to isomorphism of labelled partial orders.

Let $x$ be an element of~$\M_\Sigma$\, and let $(P,\myleq,\ell)$ be the corresponding heap. Then the $\Sigma$-words that belong to the equivalence class $x$ are exactly those $\Sigma$-words of the form $\ell(x_1)\dots\ell(x_k)$, where $(x_1,\ldots,x_k)$ is any linearization of the poset $(P,\myleq)$. In particular, for any $a\in\Sigma$, the number of occurrences of $a$ in the words $\ell(x_1)\dots\ell(x_k)$ is constant, we denote it by~$\size x{a}$\,. Similarly, the length of the words in the equivalence class $x$ is constant, we denote it by~$\size x$.

To picture heaps corresponding to traces in~$\M_\Sigma$\,, one represents elements of $\Sigma$ as elementary pieces that can be piled up, one on top of the others. The piling must satisfy the following constraints:
\begin{inparaenum}[(1)]
  \item pieces can only be moved vertically;
  \item pieces labelled by the same letter move along the same vertical lane; and
  \item\label{itensoasaa} two pieces labelled by $a$ and $b$ in $\Sigma$ can be moved independently of each other if and only if $(a,b)\notin \R$. 
\end{inparaenum} Point~\ref{itensoasaa} corresponds to the commutativity relation $a\cdot b=b\cdot a$ which holds in the monoid~$\M_\Sigma$ whenever $(a,b)\notin \R$. See Figure~\ref{fig:pqjwdpoqjw},~(ii). Any heap is obtained by letting elementary pieces fall, from top to bottom.

\begin{figure}[t]
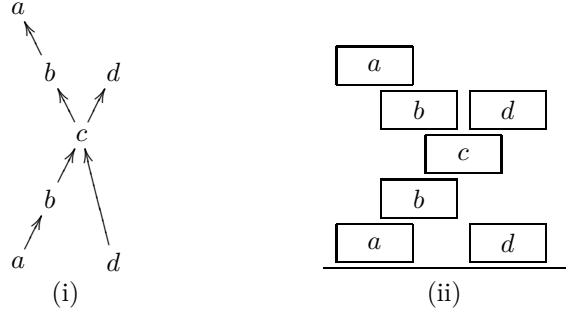

  \centering
\begin{tabular}{ccc}
\xy<.15em,0em>:
0*+{a};(8,16)*+{b}**@{-}?>*\dir{>};
(8,16)*+{\phantom{b}};
(16,32)*+{c}**@{-}?>*\dir{>},
(24,0)*+{d};
(16,32)*+{\phantom{c}}**@{-}?>*\dir{>};
(16,32)*+{\phantom{c}};
(24,48)*+{d}**@{-}?>*\dir{>},
(8,48)*+{b}**@{-}?>*\dir{>};
(8,48)*+{\phantom{b}};
(0,64)*+{a}**@{-}?>*\dir{>},
\endxy
&\strut\hspace{5em}\strut&
\xy
<.12em,0em>:
(0,0)="G",
"G"+(12,6)*{a},
"G";"G"+(24,0)**@{-};"G"+(24,12)**@{-};"G"+(0,12)**@{-};"G"**@{-},
(14,14)="G",
"G"+(12,6)*{b},
"G";"G"+(24,0)**@{-};"G"+(24,12)**@{-};"G"+(0,12)**@{-};"G"**@{-},
(28,28)="G",
"G"+(12,6)*{c},
"G";"G"+(24,0)**@{-};"G"+(24,12)**@{-};"G"+(0,12)**@{-};"G"**@{-},
(42,0)="G",
"G"+(12,6)*{d},
"G";"G"+(24,0)**@{-};"G"+(24,12)**@{-};"G"+(0,12)**@{-};"G"**@{-},
(14,42)="G",
"G"+(12,6)*{b},
"G";"G"+(24,0)**@{-};"G"+(24,12)**@{-};"G"+(0,12)**@{-};"G"**@{-},
(0,56)="G",
"G"+(12,6)*{a},
"G";"G"+(24,0)**@{-};"G"+(24,12)**@{-};"G"+(0,12)**@{-};"G"**@{-},
(42,42)="G",
"G"+(12,6)*{d},
"G";"G"+(24,0)**@{-};"G"+(24,12)**@{-};"G"+(0,12)**@{-};"G"**@{-},
(-4,-2);(72,-2)**@{-}
\endxy\\
(i)&&(ii)
\end{tabular}
\caption{(i)~{\slshape Hasse diagram of the labelled partial order corresponding to the element $x=a\cdot b\cdot d\cdot c\cdot b\cdot a\cdot d$ of the trace monoid $\M(\Sigma,\R)$ with $\Sigma=\{a,b,c,d\}$ and where $\R$ is the reflexive and symmetric closure of $\{(a,b),(b,c),(c,d)\}$.\quad}(ii)~{\slshape Heap of pieces representing the same element.}}
\label{fig:pqjwdpoqjw}
\end{figure}

If a heap is entirely made up of pieces which do not block each other, it is called a \emph{trivial heap} in~\cite{viennot86}. In~$\M_\Sigma$\,, trivial heaps correspond to commutative products $a_1\cdot\ldots\cdot a_k$, where $a_1,\ldots,a_k$ are distinct elements of $\Sigma$ such that $(a_i,a_j)\notin\R$ for all distinct $i$ and~$j$. Hence, trivial heaps correspond bijectively to the cliques of the complementary relation of $\R$ in~$\Sigma$. For brevity, we shall simply call these products \emph{cliques}. We let $\C_\Sigma$ denote the set of cliques of~$\M_\Sigma$\,.

Cliques of $\M_\Sigma$ play an important role for the study of its combinatorics. Associate to the pair $(\Sigma,\R)$ the \emph{M\"obius polynomial} $\mu_\Sigma(X)$ and the generating series $G_\Sigma(X)$  defined as in~\cite{cartier69} by:
\begin{align}
\label{eq:1}
  \mu_\Sigma(X)&=\sum_{\gamma\in\C_\Sigma}(-1)^{\size\gamma}X^{\size\gamma}\,,&
G_\Sigma(X)&=\sum_{x\in\M_\Sigma}X^{\size x}\,.
\end{align}
Then $G_\Sigma(X)$ is a rational series, and the following formula holds~\cite{cartier69,viennot86}:
\begin{gather}
\label{eq:2}
G_\Sigma(X)=\frac1{\mu_\Sigma(X)}\quad\text{in $\RR[[X]]$}.
\end{gather}

More generally, let $\U$ be a subset of~$\Sigma$. For $x\in\M_\Sigma$\,, let us write $\max(x)\subseteq\U$ if the heap $(P,\myleq,\ell)$ corresponding to $x$ has the property that all the maximal elements of the poset $(P,\myleq)$ are labelled by elements in~$\U$. Let $\G_{\Sigma,\U}(X)$ be the generating series of the elements of $\M_\Sigma$ satisfying this property:
\begin{gather*}
\G_{\Sigma,\U}(X)=
  \sum_{\substack{x\in\M_\Sigma\tq\max(x)\subseteq\U}}X^{\size x}\,.  
\end{gather*}

Then the following formula holds~\cite{viennot86}:
\begin{align}
\label{eq:4}
\G_{\Sigma,\U}(X)&=\frac{\mu_{\Sigma\setminus\U}(X)}{\mu_{\Sigma}(X)}\quad\text{in $\RR[[X]]$},
&\text{with\quad}
\mu_{\Sigma\setminus\U}(X)&=\sum_{\gamma\in\C_{\Sigma\setminus\U}}(-1)^{\size\gamma}X^{\size\gamma}\,.
\end{align}
Note that Eq.~\eqref{eq:2} is a particular case of Eq.~\eqref{eq:4}, obtained by taking $\U=\Sigma$ since $G_{\Sigma,\Sigma}(X)=G_\Sigma(X)$ and $\mu_\emptyset(X)=1$.

The following results concerning $\mu_\Sigma(X)$ are found in~\cite{goldwurm00,csikvari13,krob03}.
If $\Sigma\neq\emptyset$, then $\mu_\Sigma(X)$ has a unique root of smallest modulus in the complex plane~. This root, which we denote by~$p_\Sigma$\,, is real positive and is at most~$1$. It coincides with the radius of convergence of the power series~$\G_{\Sigma,\U}(X)$ for any non empty subset $\U$ of~$\Sigma$. Hence:
\begin{gather}
\label{eq:5}
\forall p\in(0,p_\Sigma)\qquad
  \sum_{x\in\M_\Sigma\tq\max(x)\subseteq\U}p^{\size x}=\frac{\mu_{\Sigma\setminus\U}(p)}{\mu_\Sigma(p)}\quad\text{in $\RR$}.
\end{gather}

As a particular case of~\eqref{eq:5}, obtained for $U=\Sigma$, one has:
\begin{gather}
  \label{eq:3}
\forall p\in(0,p_\Sigma)\qquad
G_\Sigma(p)=\frac{1}{\mu_\Sigma(p)}\quad\text{in $\RR$}.
\end{gather}

For all subsets $\U,\U'\subseteq\Sigma$ with $\U\subseteq\U'$, it is clear that $\M_{\Sigma\setminus\U'}\subseteq\M_{\Sigma\setminus\U}$. Hence: $G_{\Sigma\setminus\U'}(p)\leqslant G_{\Sigma\setminus\U}(p)<\infty$ for every $p\in(0,p_{\Sigma\setminus\U})$ and therefore:
\begin{gather}
  \label{eq:19}
\U\subseteq\U'\implies p_{\Sigma\setminus\U}\leqslant p_{\Sigma\setminus\U'}\,.
\end{gather}

Let $\M_0$ be the trace monoid with one generator. Then $G_\M(p)\geqslant G_{\M_0}(p)$ holds for any trace monoid $\M\neq\{\unit\}$ and for every $p\in(0,p_\Sigma)$. Since $\mu_{\M_0}(X)=1-X$, formula~(\ref{eq:3}) yields:
\begin{gather}
\label{eq:13}
  \forall p\in(0,p_\Sigma)\qquad
\mu_\Sigma(p)\leqslant 1-p.
\end{gather}

Finally, the \emph{link} $\Lk(a)$ of an element $a\in\Sigma$ is defined by $\Lk(a)=\bigl\{b\in\Sigma\tq(a,b)\in\R\bigr\}$. Then, for every $a\in\Sigma$, the following identity derives at once from the definition of the M\"obius polynomials:
\begin{gather}
  \label{eq:28}
\mu_\Sigma(X)=\mu_{\Sigma\setminus\{a\}}(X)-X\mu_{\Sigma\setminus\Lk(a)}(X).
\end{gather}

\section{Probability measures on trace monoids}
\label{sec:uniform-measures}

\subsection{Discrete probability distributions}
\label{sec:discr-mult-meas}

Given a trace monoid $\M_\Sigma=\M(\Sigma,\R)$, we introduce a family of discrete probability distributions on~$\M_\Sigma$\,. This family of probability distributions is indexed by a real parameter ranging over the open interval $(0,p_\Sigma)$, where $p_\Sigma$ denotes the root of smallest modulus of the Möbius polynomial of the pair~$(\Sigma,\R)$. For each $p\in(0,p_\Sigma)$, we let $B_{\Sigma,p}$ be the probability distribution on $\M_\Sigma$ defined by:
\begin{gather}
  \label{eq:7}
\forall x\in\M_\Sigma\qquad B_{\Sigma,p}\bigl(\{x\}\bigr)=\mu_\Sigma(p)p^{\size x}\,,\qquad\text{recalling that }
\mu_\Sigma(p)=\frac1{G_\Sigma(p)}\,.
\end{gather}

Let $\leq$ denote the left divisibility relation on~$\M_\Sigma$\,, defined for $x,y\in\M_\Sigma$ by $x\leq y\iff\exists z\in\M_\Sigma\quad x\cdot z = y$. For every $x\in\M_\Sigma$\,, we also define $\Up x=\{y\in\M_\Sigma\tq x\leq y\}$. Then, for every $p\in(0,p_\Sigma)$, $B_{\Sigma,p}$~is the unique probability distribution on $\M_\Sigma$ satisfying the following identities~\cite{abbes15conf}:
\begin{gather}
  \label{eq:8}
\forall x\in\M_\Sigma\qquad B_{\Sigma,p}\bigl(\Up x\bigr)=p^{\size x}\,.
\end{gather}

\begin{definition}
  \label{def:2}
The discrete probability distributions $B_{\Sigma,p}$ for $p\in(0,p_\Sigma)$ are the \emph{multiplicative probabilities} on~$\M_\Sigma$\,.
\end{definition}

Let $\xi$ be a random element in $\M_\Sigma$ distributed according to~$B_{\Sigma,p}$\,. Then the average length of~$\xi$, denoted by $\esp_p\size\xi$, is finite and given by:
\begin{align*}
  \esp_p\size\xi =\mu_\Sigma(p)\sum_{x\in\M_\Sigma}\size x p^{\size x}=\mu_\Sigma(p) pG_\Sigma'(p)=-p\frac{\mu_\Sigma'(p)}{\mu_\Sigma(p)}\,.
\end{align*}

Hence, $\esp_p\size \xi$ is finite for every $p<p_\Sigma$\,, but goes to $\infty$ when $p$ goes to~$p_\Sigma$\,. Informally speaking, the mass of the distribution $B_{\Sigma,p}$ is pushed toward arbitrary large elements of $\M_\Sigma$ when $p\to p_\Sigma$\,. This suggests to introduce infinite traces by a compactification of~$\M_\Sigma$\,. The set of infinite traces will then support the limit of the probability distributions~$B_{\Sigma,p}$\,, when $p\to p_\Sigma$\,. This is formalized next---the remaining of this section can be skipped until the reading of Section~\ref{sec:rand-gener-infin}.

\subsection{Uniform measure at infinity}
\label{sec:unif-meas-bound}

Borrowing material from~\cite{abbes15}, we briefly explain the construction of infinite traces and of the uniform measure on their set.

If $x=\seq xn$ and $y=\seq yn$ are two non decreasing sequences in~$\M_\Sigma$\,, we define $x\sqsubseteq y$ whenever:
\begin{gather}
  \label{eq:18}
\forall n\geqslant0\quad\exists k\geqslant0\quad x_n\leq y_k\,.
\end{gather}
The relation $\sqsubseteq$ is a preorder relation on the set of non decreasing sequences. Let $\asymp$ be the equivalence relation defined by $x\asymp y\iff(x\sqsubseteq y\wedge y\sqsubseteq x)$.  Equivalence classes of non decreasing sequences modulo $\asymp$ are called \emph{generalized traces}, and their set is denoted by~$\Mbar_\Sigma$\,. The set $\Mbar_\Sigma$ is equipped with an ordering relation, denoted by~$\leq$, which is the collapse of the preordering relation~$\sqsubseteq$. 

The partial order $(\M_\Sigma,\leq)$ is embedded into~$(\Mbar_\Sigma,\leq)$, by sending an element $x\in\M_\Sigma$ to the equivalence class of the constant sequence $\seq xn$ with $x_n=x$ for all $n\geqslant0$. Hence we identify $\M_\Sigma$ with its image in~$\Mbar_\Sigma$\,, and we put $\BM_\Sigma=\Mbar_\Sigma\setminus\M_\Sigma$\,. Elements of $\BM_\Sigma$ are called \emph{infinite traces}. 

Visually, infinite traces can be pictured as heaps obtained as in Figure~\ref{fig:pqjwdpoqjw}, but with infinitely many pieces piled up. The relation $x\leq\xi$ for $x\in\M_\Sigma$ and for $\xi\in\BM_\Sigma$ means that the infinite heap $\xi$ can be built by piling up pieces on top of~$x$. Just as the words in the equivalence class of a heap describe the different ways of building the heap by adding one piece after another, the non decreasing sequences in the equivalence class of an infinite heap describe the several ways of building an infinite heap. 

For every $x\in\M_\Sigma$\,, we define the \emph{visual cylinder} of base $x$ as the subset of~$\BM_\Sigma$\,:
\begin{gather}
  \label{eq:20}
\up x=\{\xi\in\BM_\Sigma\tq x\leq\xi\}.
\end{gather}

There is a natural topology on~$\Mbar_\Sigma$ making it a compact metrisable space, inducing the discrete topology on~$\M_\Sigma$\,, and such that every non decreasing sequence $x=\seq xn$ in $\M_\Sigma$ is convergent toward the equivalence class of~$x$. The limit also coincides with the least upper bound in $(\Mbar_\Sigma,\leq)$ of the chain~$\seq xn$\,. Hence:
\begin{gather}
  \label{eq:22}
\text{If $x\in\Mbar_\Sigma$ is the equivalence class of $\seq xn$, then }
x=\bigvee_{n\geqslant0}x_n\,.
\end{gather}
For the topology induced on~$\BM_\Sigma$\,, visual cylinders are both open and closed. The topological space $\BM_\Sigma$ is the \emph{boundary of\/~$\M_\Sigma$}\,.

\emph{Via} the embedding $\M_\Sigma\to\Mbar_\Sigma$\,, the family $(B_{\Sigma,p})_{p\in(0,p_\Sigma)}$ of multiplicative probabilities can now be seen as a family of discrete distributions on the compactification~$\Mbar_\Sigma$ rather than on~$\M_\Sigma$\,. Standard techniques from Functional Analysis allow to prove the weak convergence of~$B_{\Sigma,p}$\,, when $p\to p_\Sigma$\,, toward a probability measure $B_{\Sigma,p_\Sigma}$ on~$\BM_\Sigma$\,, entirely characterized by the following Bernoulli-like identities~\cite{abbes15,abbes15conf}:
\begin{gather}
  \label{eq:9}
\forall x\in\M_\Sigma\qquad B_{\Sigma,p_\Sigma}(\up x)=p_\Sigma^{\size x}\,.
\end{gather}

\begin{definition}
  \label{def:3}
The probability measure $B_{\Sigma,p_\Sigma}$  on\/ $\BM_\Sigma$ is the \emph{uniform measure at infinity}.
\end{definition}

So far, we have thus defined a family of probability measures $B_{\Sigma,p}$ on~$\Mbar_\Sigma$\,, for $p$ ranging over the half-closed interval $(0,p_\Sigma]$. Note the alternative: $B_{\Sigma,p}$~is either concentrated on $\M_\Sigma$ only, in the case where $p<p_\Sigma$\,, or it is concentrated on $\BM_\Sigma$ only, in the case where $p=p_\Sigma$\,. 

 
The remaining of the paper is devoted to the random generation of elements in $\BM_\Sigma$ uniformly distributed; more precisely, we are seeking a random algorithm, the execution of which produces successive finite approximations of elements $\xi\in\BM_\Sigma$ distributed according to~$B_{\Sigma,p_\Sigma}$\,.


For this task, we first focus on the random generation of finite heaps, that is to say, elements of $\M_\Sigma$ distributed according to $B_{\Sigma,p}$ with $p<p_\Sigma$\,. This is the topic of Section~\ref{sec:rand-gener-finite}. We shall then see how to derive random generation techniques for infinite heaps in Section~\ref{sec:rand-gener-infin}.

\section{Random generation of finite traces}
\label{sec:rand-gener-finite}

As a first task, we consider the random generation of elements in a trace monoid $\M_\Sigma=\M(\Sigma,\R)$ distributed according to a probability distribution $B_{\Sigma,p}$ for $p\in(0,p_\Sigma)$. We target an incremental procedure, where elements of $\Sigma$ are added one after the other.

\subsection{Decomposition of traces}
\label{sec:decomposition-traces}

We fix an arbitrary element $a_1\in\Sigma$. Let $x\in\M_\Sigma$ be such that $\size x{a_1}>0$.
Consider a heap $(P,\myleq,\ell)$ representing~$x$. The elements of $P$ labelled by $a_1$ form a non empty chain  according to the axiom~\ref{item:1} of traces recalled at the beginning of Section~\ref{sec:comb-trace-mono}. Let $\alpha$ be the minimum of this chain, and consider $Q=\{\beta\in P\tq\beta\myleq\alpha\}$. Obviously, $Q$~equipped with the restrictions of $\myleq$ and of $\ell$ is a sub-heap of $(P,\myleq,\ell)$, and corresponds thus to an element $y$ in~$\M_\Sigma$\,. We put $V_{a_1}(x)=y$, and we define (see Figure~\ref{fig:oiqoqihwq}):
\begin{gather}
  \label{eq:10}
\Pyr{a_1}=\bigl\{V_{a_1}(x)\tq x\in\M_\Sigma\,,\quad\size x{a_1}>0\bigr\}.
\end{gather}

\begin{figure}[t]
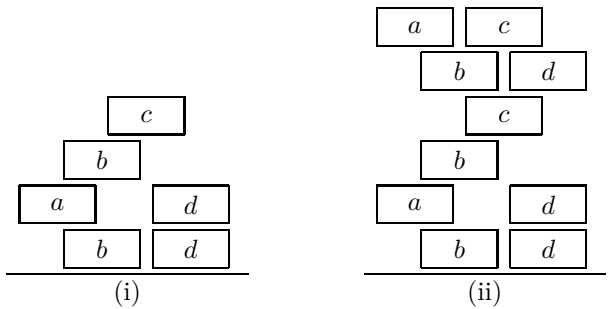

  \centering
\begin{tabular}{ccc}
  \xy
<.12em,0em>:
(14,0)="G",
"G"+(12,6)*{b},
"G";"G"+(24,0)**@{-};"G"+(24,12)**@{-};"G"+(0,12)**@{-};"G"**@{-},
(0,14)="G",
"G"+(12,6)*{a},
"G";"G"+(24,0)**@{-};"G"+(24,12)**@{-};"G"+(0,12)**@{-};"G"**@{-},
(14,28)="G",
"G"+(12,6)*{b},
"G";"G"+(24,0)**@{-};"G"+(24,12)**@{-};"G"+(0,12)**@{-};"G"**@{-},
(28,42)="G",
"G"+(12,6)*{c},
"G";"G"+(24,0)**@{-};"G"+(24,12)**@{-};"G"+(0,12)**@{-};"G"**@{-},
(42,0)="G",
"G"+(12,6)*{d},
"G";"G"+(24,0)**@{-};"G"+(24,12)**@{-};"G"+(0,12)**@{-};"G"**@{-},
(42,14)="G",
"G"+(12,6)*{d},
"G";"G"+(24,0)**@{-};"G"+(24,12)**@{-};"G"+(0,12)**@{-};"G"**@{-},
(-4,-2);(72,-2)**@{-}
\endxy
&&  \xy
<.12em,0em>:
(14,0)="G",
"G"+(12,6)*{b},
"G";"G"+(24,0)**@{-};"G"+(24,12)**@{-};"G"+(0,12)**@{-};"G"**@{-},
(0,14)="G",
"G"+(12,6)*{a},
"G";"G"+(24,0)**@{-};"G"+(24,12)**@{-};"G"+(0,12)**@{-};"G"**@{-},
(14,28)="G",
"G"+(12,6)*{b},
"G";"G"+(24,0)**@{-};"G"+(24,12)**@{-};"G"+(0,12)**@{-};"G"**@{-},
(28,42)="G",
"G"+(12,6)*{c},
"G";"G"+(24,0)**@{-};"G"+(24,12)**@{-};"G"+(0,12)**@{-};"G"**@{-},
(42,0)="G",
"G"+(12,6)*{d},
"G";"G"+(24,0)**@{-};"G"+(24,12)**@{-};"G"+(0,12)**@{-};"G"**@{-},
(42,14)="G",
"G"+(12,6)*{d},
"G";"G"+(24,0)**@{-};"G"+(24,12)**@{-};"G"+(0,12)**@{-};"G"**@{-},
(14,56)="G",
"G"+(12,6)*{b},
"G";"G"+(24,0)**@{-};"G"+(24,12)**@{-};"G"+(0,12)**@{-};"G"**@{-},
(0,70)="G",
"G"+(12,6)*{a},
"G";"G"+(24,0)**@{-};"G"+(24,12)**@{-};"G"+(0,12)**@{-};"G"**@{-},
(42,56)="G",
"G"+(12,6)*{d},
"G";"G"+(24,0)**@{-};"G"+(24,12)**@{-};"G"+(0,12)**@{-};"G"**@{-},
(28,70)="G",
"G"+(12,6)*{c},
"G";"G"+(24,0)**@{-};"G"+(24,12)**@{-};"G"+(0,12)**@{-};"G"**@{-},
(-4,-2);(72,-2)**@{-}
\endxy
\\
(i)&\strut\qquad\strut&(ii)
\end{tabular}
\caption{(i)~\textsl{Heap representing the $c$-pyramidal element $b\cdot a\cdot b\cdot d\cdot d\cdot c$ in the trace monoid~$\M_\Sigma$\,, where $(\Sigma,\R)$ is as in Figure~\ref{fig:pqjwdpoqjw}.}\quad (ii)~\textsl{An element $x\in\M_\Sigma$ such that $V_c(x)$ corresponds to the $c$-pyramidal heap represented on the left. For this element~$x$, we also have $V_a(x)=b\cdot a$, $V_b(x)=b$ and $V_d(x)=d$.}}
  \label{fig:oiqoqihwq}
\end{figure}

\begin{definition}
  \label{def:1}
Elements of $\Pyr{a_1}$ are said to be $a_1$-pyramidal in~$\M_\Sigma$\,.
\end{definition}

Any element  $u\in\Pyr{a_1}$ is a product
$u=z\cdot a_1$\,, where $\size z{a_1}=0$, and the element labelled by $a_1$ in the heap representing $u$ is the unique maximal element of this heap. Equivalently, $z$~belongs to the sub-monoid $\M_{\Sigma\setminus\{a_1\}}$ and all the maximal elements of the heap representing $z$ belong to the link of~$a_1$, $\Lk(a_1)=\{b\in\Sigma\tq (a_1,b)\in\R\}$. Conversely, any element $u=z\cdot a_1$ of this form belongs to~$\Pyr{a_1}$, so we have:
\begin{gather}
  \label{eq:11}
\Pyr{a_1}=\bigl\{z\cdot a_1\tq z\in\M_{\Sigma\setminus\{a_1\}}\,,\quad\max(z)\subseteq\Lk(a_1)\bigr\}.
\end{gather}

The successive occurrences of $a_1$ within an element of $\M_\Sigma$ are associated with $a_1$-pyramidal elements, which yields the following decomposition result. The result, visually intuitive, is elementary and belongs to trace theory, hence we omit its proof; see an example below.

\begin{proposition}
  \label{prop:3}
Let $\M_\Sigma$ be a trace monoid, let $a_1\in\Sigma$, and let $x\in\M_\Sigma$\,. Then there exists a unique integer $k\geq0$, given by $k=\size x{a_1}$\,, and a unique tuple $(u_0,\ldots,u_{k-1},u_k)$ such that:
\begin{align}
  \label{eq:12}
u_0,\dots,u_{k-1}&\in\Pyr{a_1}\,,&
\text{and\quad }u_k&\in\M_{\Sigma\setminus\{a_1\}}\,,&
\text{and\quad }x&=u_0\cdot\ldots\cdot u_k\,.
\end{align}
Furthermore, for any subset $T\subseteq\Sigma$ such that $a_1\in T$, we have:
\begin{gather}
  \label{eq:14}
\max(x)\subseteq T\iff\max(u_k)\subseteq T.
\end{gather}
\end{proposition}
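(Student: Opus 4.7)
My plan is to argue directly on the heap $(P,\myleq,\ell)$ that represents~$x$. Since $a_1\,\R\,a_1$, axiom~\ref{item:1} forces the $a_1$-labeled elements of $P$ to form a totally ordered chain $\alpha_1\prec\cdots\prec\alpha_k$, where $k=\size x{a_1}$. I would set $P_0=\emptyset$, $P_i=\{\beta\in P\tq\beta\myleq\alpha_i\}$ for $1\leq i\leq k$, and $P_{k+1}=P$, giving a nested sequence of order ideals. Each difference $P_i\setminus P_{i-1}$ inherits a sub-heap structure and so represents a trace; call these traces $u_0,\ldots,u_{k-1}$, and call $u_k$ the trace represented by $P\setminus P_k$.

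For existence I would verify the three required properties. The chain intersects $P_i$ in $\{\alpha_1,\ldots,\alpha_i\}$, so $\alpha_i$ is the unique $a_1$-labeled element of the slice $P_i\setminus P_{i-1}$ and is the maximum of $P_i$, hence also the unique maximum of this slice; by~\eqref{eq:11} this gives $u_{i-1}\in\Pyr{a_1}$. Likewise $u_k\in\M_{\Sigma\setminus\{a_1\}}$, since $P\setminus P_k$ contains none of the $\alpha_j$. The equality $x=u_0\cdots u_k$ then follows from concatenating linearizations of the slices in order: the result is a valid linearization of $P$ because any $\beta$ in a later slice and any $\gamma$ in an earlier slice satisfy $\gamma\not\succ\beta$, otherwise $\beta\myleq\gamma\myleq\alpha_{j-1}$ would place $\beta$ in the earlier slice.

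For uniqueness, given a second decomposition $x=v_0\cdots v_m$ I first count $a_1$-occurrences to obtain $m=k$. Let $P^{(0)},\ldots,P^{(k)}$ denote the sub-heaps of $P$ induced by that concatenation. By pyramidality, for $j\leq k$ the block $P^{(j-1)}$ contains a unique $a_1$-labeled position; since $a_1$-positions from an earlier block precede those of any later block in a concatenated heap, this position must be $\alpha_j$. The pyramidality of $v_0$ then makes $\alpha_1$ the unique maximum of $P^{(0)}$, giving $P^{(0)}\subseteq P_1$. Conversely, any $\beta\myleq\alpha_1$ in $P$ arises from a chain whose block indices are weakly increasing and end at $0$, hence lies entirely in $P^{(0)}$, so $P_1\subseteq P^{(0)}$. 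Therefore $v_0=V_{a_1}(x)=u_0$, and left-cancellativity of $\M_\Sigma$ together with an induction on $k$ yield $v_i=u_i$ for all $i$.

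Finally, for the equivalence~\eqref{eq:14}, the splitting $P=(P\setminus P_k)\sqcup P_k$ has the property that no element of $P_k$ can lie strictly above any element of $P\setminus P_k$, since such a relation would force the latter into $P_k$. Consequently a maximal element of $P$ lying in $P\setminus P_k$ is precisely a maximal element of $u_k$, while a maximal element of $P$ lying in $P_k$ must equal $\alpha_k$ itself. As $\ell(\alpha_k)=a_1\in T$, the condition $\max(u_k)\subseteq T$ is equivalent to $\max(x)\subseteq T$. I expect the most delicate step to be the uniqueness part, specifically the identification $P^{(0)}=P_1$: the inclusion $\subseteq$ is immediate from pyramidality, but the reverse inclusion genuinely relies on the monotone chain-propagation principle in a concatenated heap, which is the only non-mechanical ingredient in the argument.
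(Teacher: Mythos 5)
Your proof is correct. The paper actually omits the proof of Proposition~\ref{prop:3} as elementary trace theory, and your argument is exactly the natural formalization of the decomposition it describes: slicing the heap along the nested order ideals $P_i=\{\beta\in P\tq\beta\myleq\alpha_i\}$ below the successive $a_1$-positions, which is precisely how the paper defines $V_{a_1}$, and then checking via the paper's characterization of $\Pyr{a_1}$ that each slice is pyramidal. The one genuinely non-mechanical ingredient --- that a factorization of a trace partitions its heap into convex blocks whose indices increase weakly along dependency chains --- is correctly identified and correctly deployed both in your existence step (concatenated linearizations of the slices linearize $P$) and in your uniqueness step (forcing $P^{(0)}=P_1$ before cancelling and inducting).
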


Example: for the element $x=b\cdot a\cdot b\cdot d\cdot d\cdot c\cdot b\cdot d\cdot a\cdot c$ represented in Figure~\ref{fig:oiqoqihwq}, (ii), the decomposition with respect to the successive occurrences of $a_1=c$ is the following:
\begin{align*}
  k&=2,&
u_0&=b\cdot a\cdot b\cdot d\cdot d\cdot c,&
u_1&=b\cdot d\cdot c,&u_2&=a.
\end{align*}

\subsection{Probabilistic analysis}
\label{sec:prob-analys}

We study the probabilistic counterpart of the decomposition stated in Proposition~\ref{prop:3}. Throughout Section~\ref{sec:prob-analys}, we fix a trace monoid $\M_\Sigma$ with $\Sigma\neq\emptyset$ and an element $a_1\in\Sigma$.

The combinatorial identities of Section~\ref{sec:comb-trace-mono} yield an expression, which will be useful later, for the probability of occurrence of the given element $a_1\in\Sigma$.

\begin{lemma}
  \label{prop:1}
Let\/ $\M_\Sigma$ be a trace monoid, let $p\in(0,p_\Sigma)$, and let $\xi$ be a random element in $\M_\Sigma$ distributed according to~$B_{\Sigma,p}$\,. Let $a_1\in\Sigma$, and let $r=B_{\Sigma,p}\bigl(\size{\xi}{a_1}>0\bigr)$. Then: 
\begin{gather}
\label{eq:16}
r=1-\frac{\mu_\Sigma(p)}{\mu_{\Sigma\setminus\{a_1\}}(p)}=p\frac{\mu_{\Sigma\setminus\Lk(a_1)}(p)}{\mu_{\Sigma\setminus\{a_1\}}(p)}\,.
\end{gather}
\end{lemma}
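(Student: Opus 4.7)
The plan is to show the two equalities separately, starting from the complementary probability $1-r = B_{\Sigma,p}\bigl(\size{\xi}{a_1}=0\bigr)$, which is the easier quantity to compute.

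First I would observe that $\size{x}{a_1}=0$ if and only if $x$ belongs to the submonoid $\M_{\Sigma\setminus\{a_1\}}$. Using the explicit form of the multiplicative distribution~\eqref{eq:7}, this gives
\begin{gather*}
1-r \;=\; \sum_{x\in\M_{\Sigma\setminus\{a_1\}}}\mu_\Sigma(p)\, p^{\size x}
\;=\; \mu_\Sigma(p)\, G_{\Sigma\setminus\{a_1\}}(p).
\end{gather*}
To legitimately apply the closed form~\eqref{eq:3} to the submonoid $\M_{\Sigma\setminus\{a_1\}}$, I need $p<p_{\Sigma\setminus\{a_1\}}$\,; this follows from~\eqref{eq:19} applied to $\U=\emptyset\subseteq\U'=\{a_1\}$, which yields $p_\Sigma\leqslant p_{\Sigma\setminus\{a_1\}}$\,. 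Hence $G_{\Sigma\setminus\{a_1\}}(p)=1/\mu_{\Sigma\setminus\{a_1\}}(p)$, and the first equality of~\eqref{eq:16} follows by rearranging.

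For the second equality, I would invoke the recurrence~\eqref{eq:28}, which provides the identity $\mu_\Sigma(p)=\mu_{\Sigma\setminus\{a_1\}}(p)-p\,\mu_{\Sigma\setminus\Lk(a_1)}(p)$. Substituting into the first expression gives
\begin{gather*}
r \;=\; 1-\frac{\mu_{\Sigma\setminus\{a_1\}}(p)-p\,\mu_{\Sigma\setminus\Lk(a_1)}(p)}{\mu_{\Sigma\setminus\{a_1\}}(p)}
\;=\; p\,\frac{\mu_{\Sigma\setminus\Lk(a_1)}(p)}{\mu_{\Sigma\setminus\{a_1\}}(p)},
\end{gather*}
which is exactly the second equality. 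The argument is entirely routine once the previously established tools~\eqref{eq:3}, \eqref{eq:19} and~\eqref{eq:28} are at hand; the only mild verification is the inclusion $(0,p_\Sigma)\subseteq(0,p_{\Sigma\setminus\{a_1\}})$ needed to apply~\eqref{eq:3} to the submonoid, and I expect no conceptual obstacle beyond that bookkeeping.
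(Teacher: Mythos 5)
Your proof is correct and follows essentially the same route as the paper: compute $1-r$ as the $B_{\Sigma,p}$-mass of the submonoid $\M_{\Sigma\setminus\{a_1\}}$ via~\eqref{eq:7} and~\eqref{eq:3}, then obtain the second equality from~\eqref{eq:28}. The only cosmetic difference is that the paper justifies applying~\eqref{eq:3} to the submonoid by noting the subseries is necessarily convergent, while you invoke~\eqref{eq:19}; both are valid.
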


\begin{proof}
Let us prove the left equality in~\eqref{eq:16}.
We have $1-r=B_{\Sigma,p}\bigl(\size{\xi}{a_1}=0\bigr)=B_{\Sigma,p}\bigl(\xi\in\M_{\Sigma\setminus\{a_1\}}\bigr)$. Hence the probability $1-r$ evaluates as follows, taking into account~\eqref{eq:7} on the one hand, and applying the formula~\eqref{eq:3} with the trace monoid $\M_{\Sigma\setminus\{a_1\}}$ on the other hand:
\begin{align*}
  1-r=\mu_\Sigma(p)\cdot\Bigl(\sum_{x\in\M_{\Sigma\setminus\{a_1\}}}p^{\size x}\Bigr)=\frac{\mu_\Sigma(p)}{\mu_{\Sigma\setminus\{a_1\}}(p)}\,,
\end{align*}
which is the left equality in~\eqref{eq:16}. Note that the series is necessarily convergent, hence the equality~\eqref{eq:3} applies. The equality with the right member in~\eqref{eq:16} derives directly from the identity~\eqref{eq:28}.
\end{proof}

Consider a probability distribution on~$\M_\Sigma$ and a random element $\xi\in\M_\Sigma$\,. Then $k$ and the elements $u_0,\dots,u_k$ associated to $\xi$ as in Proposition~\ref{prop:3} can be seen as random variables. To underline this point of view, we introduce the notations $K(\xi)$, and $U_0(\xi),\ldots,U_K(\xi)$ such that:
\begin{align}
\label{eq:17}
  \xi&=U_0\cdot\ldots\cdot U_K
\end{align}
with $K=\size{\xi}{a_1}$\,, $U_0,\ldots,U_{K-1}\in\Pyr{a_1}$ and $U_K\in\M_{\Sigma\setminus\{a_1\}}$\,.

It is straightforward to explicitly describe the laws of these random variables: this is stated in Proposition~\ref{prop:2}. In turn, their description
provides the needed hint to formulate a \emph{reconstruction result} for the probability distribution~$B_{\Sigma,p}$\,, which we state in Proposition~\ref{prop:2}'. For random generation purposes, it is the latter result in which we are most interested. The two propositions~\ref{prop:2} and~\ref{prop:2}' are essentially equivalent, they mainly differ in their formulation. Hence we prove both of them simultaneously, based on the following lemma.

\begin{lemma}
\label{lem:3}
Let $L_0,L_1,\ldots,L_k$ be subsets of $\M_\Sigma$, and let $\varphi : L_0 \times \ldots \times L_k \to \M_\Sigma$ be defined by
$\varphi(\ell_0,\ldots,\ell_k) = \ell_0 \cdot \ldots \cdot \ell_k$.
Assume that $\varphi$ is injective, and let $H$ be the set $\varphi(L_0 \times \ldots \times L_k)$.
Then, for all $p \in (0,p_\Sigma)$, we have $B_{\Sigma,p}(H) = \mu_\Sigma(p)^{-k} \prod_{i=0}^k B_{\Sigma,p}(L_i)$.
%
\end{lemma}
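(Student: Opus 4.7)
The plan is to compute $B_{\Sigma,p}(H)$ directly from the defining formula~\eqref{eq:7} and use the injectivity of $\varphi$ together with additivity of the length function under concatenation in $\M_\Sigma$, namely $\size{\ell_0 \cdot \ldots \cdot \ell_k} = \sum_{i=0}^k \size{\ell_i}$. Since $H$ is a subset of $\M_\Sigma$, I would write
\begin{align*}
B_{\Sigma,p}(H) = \sum_{x \in H} B_{\Sigma,p}(\{x\}) = \mu_\Sigma(p) \sum_{x \in H} p^{\size x}.
\end{align*}

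Next, because $\varphi$ is injective, each $x \in H$ is obtained exactly once as $\ell_0 \cdot \ldots \cdot \ell_k$ with $(\ell_0,\ldots,\ell_k) \in L_0 \times \ldots \times L_k$, so the sum reparametrises without overcounting:
\begin{align*}
\sum_{x \in H} p^{\size x} = \sum_{(\ell_0,\ldots,\ell_k) \in L_0 \times \ldots \times L_k} p^{\size{\ell_0} + \ldots + \size{\ell_k}} = \prod_{i=0}^k \Bigl(\sum_{\ell \in L_i} p^{\size \ell}\Bigr).
\end{align*}
Each inner sum is finite because it is dominated by $G_\Sigma(p) = 1/\mu_\Sigma(p) < \infty$ for $p \in (0,p_\Sigma)$, so the factorisation is valid.

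Finally I would substitute back, using \eqref{eq:7} in the reverse direction: for each $i$, $\sum_{\ell \in L_i} p^{\size \ell} = \mu_\Sigma(p)^{-1} B_{\Sigma,p}(L_i)$. This gives
\begin{align*}
B_{\Sigma,p}(H) = \mu_\Sigma(p) \prod_{i=0}^k \mu_\Sigma(p)^{-1} B_{\Sigma,p}(L_i) = \mu_\Sigma(p)^{1 - (k+1)} \prod_{i=0}^k B_{\Sigma,p}(L_i) = \mu_\Sigma(p)^{-k} \prod_{i=0}^k B_{\Sigma,p}(L_i),
\end{align*}
which is the claim. There is no real obstacle here: the only subtlety is the bookkeeping of the powers of $\mu_\Sigma(p)$, which collect one factor $\mu_\Sigma(p)$ from expressing $B_{\Sigma,p}(H)$ as a weighted sum and $k+1$ factors $\mu_\Sigma(p)^{-1}$ from converting each inner sum $\sum_{\ell \in L_i} p^{\size \ell}$ back into the probability $B_{\Sigma,p}(L_i)$. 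The use of injectivity is essential only at the reparametrisation step.
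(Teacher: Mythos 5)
Your proof is correct and follows essentially the same route as the paper's: expand $B_{\Sigma,p}(H)$ via the defining formula, use injectivity of $\varphi$ and additivity of the length to factor the sum over $H$ into a product of sums over the $L_i$, and convert each factor back into $B_{\Sigma,p}(L_i)$, collecting the powers of $\mu_\Sigma(p)$. Nothing is missing.
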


\begin{proof}
According to~\eqref{eq:7}, we have $B_{\Sigma,p}(H) = \mu_\Sigma(p) \sum_{x \in H}p^{\size{x}}$\,. Therefore, using that $\varphi$ is injective and that the length function $\size{\cdot}$ is additive:
\begin{align*}
  B_{\Sigma,p}(H)&=\mu_\Sigma(p)\cdot\Bigl(\sum_{(\ell_0,\dots,\ell_k)\in L_0\times\dots\times L_k}
p^{\size{\ell_0}}\cdot\ldots\cdot p^{\size{\ell_k}}\Bigr)=\mu_\Sigma(p)\cdot\prod_{i=0}^k\Bigl(\sum_{\ell\in L_i}p^{\size{\ell}}\Bigr)\,.
\end{align*}
Taking into account that $B_{\Sigma,p}(L_i)=\mu_\Sigma(p)\cdot\Bigl(\sum_{\ell\in L_i}p^{\size{\ell}}\Bigr)$, which is valid for every integer $i\in\{0,\dots,k\}$, the result follows.
\end{proof}

\begin{proposition}
  \label{prop:2}
Let $T$ be a non-empty subset of\/ $\Sigma$ and let $a_1$ be an element of~$T$.
Assume that $\xi$ is a random element in a trace monoid\/ $\M_\Sigma$ distributed according to the probability distribution $B_{\Sigma,p}$
with $p\in(0,p_\Sigma)$ conditionally on\/ $\{\max(\xi)\subseteq T\}$.
Let $K\in\ZZ_{\geqslant0}$, $U_0,\ldots,U_{K-1}\in\Pyr{a_1}$ and $U_K\in\M_{\Sigma\setminus\{a_1\}}$ be
the random variables defined as in\/~\eqref{eq:17} with respect to~$a_1$.
Finally, let $r=B_{\Sigma,p}\bigl(\size{\xi}{a_1}>0\bigr)$. Then:
\begin{enumerate}
\item\label{item:2} The integer $K$ follows a geometric law of parameter~$r$:
  \begin{align}
\label{eq:15}\forall k\in\ZZ_{\geqslant0}\quad B_{\Sigma,p}\bigl(K=k \mid \max(\xi) \subseteq T\bigr) & = (1-r)r^k.
  \end{align}
  
 \item\label{item:3} 
For every non negative integer~$k$, and conditionally on\/ $\{K=k\}$:
\begin{enumerate}
\item\label{item:3a} The variables $U_0,\ldots,U_{k-1},U_k$ are independent.
\item\label{item:3b} The variables $U_0,\dots,U_{k-1}$ are identically distributed. For each integer $i\in\{0,\dots,k-1\}$,
let $V_i$ be the unique element in $\M_{\Sigma\setminus\{a_1\}}$ such that $U_i=V_i\cdot a_1$\,.
The common distribution of\/ $V_0,\dots,V_{k-1}$ is that of an element $\xi\in\M_{\Sigma\setminus\{a_1\}}$
distributed according to $B_{\Sigma\setminus\{a_1\},p}$ conditionally on\/ $\{\max(\xi)\subseteq\Lk(a_1)\}$.
\item\label{item:3c} The variable $U_k$ is distributed in $\M_{\Sigma\setminus\{a_1\}}$ according to the probability distribution $B_{\Sigma\setminus\{a_1\},p}$
conditionally on\/ $\{\max(\,\cdot\,)\subseteq T\}$.
\end{enumerate}
\end{enumerate}
\end{proposition}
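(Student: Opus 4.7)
My plan is to apply Lemma~\ref{lem:3} to the decomposition of Proposition~\ref{prop:3} in order to compute the joint law of $(K, U_0, \ldots, U_K)$ under $B_{\Sigma,p}$ restricted to the event $\{\max(\xi) \subseteq T\}$, and to read off both claims from the resulting product formula. The main obstacle is bookkeeping with the various Möbius polynomials arising when one passes between $\M_\Sigma$ and the sub-monoid $\M_{\Sigma\setminus\{a_1\}}$; but everything collapses cleanly using Lemma~\ref{prop:1}, identity~\eqref{eq:5}, and the characterization~\eqref{eq:11} of $a_1$-pyramidal elements.

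First I would fix $k \geqslant 0$ and consider $H_k = \{x \in \M_\Sigma \tq \size{x}{a_1} = k,\ \max(x) \subseteq T\}$. By Proposition~\ref{prop:3} combined with equivalence~\eqref{eq:14}, the concatenation map $\varphi(u_0,\ldots,u_k) = u_0 \cdot \ldots \cdot u_k$ is a bijection from $\Pyr{a_1}^k \times L_T$ onto $H_k$, where $L_T = \{z \in \M_{\Sigma\setminus\{a_1\}} \tq \max(z) \subseteq T\}$. Lemma~\ref{lem:3} then yields $B_{\Sigma,p}(H_k) = \mu_\Sigma(p)^{-k}\, B_{\Sigma,p}(\Pyr{a_1})^k\, B_{\Sigma,p}(L_T)$. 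Using~\eqref{eq:11} and~\eqref{eq:5} applied within $\M_{\Sigma\setminus\{a_1\}}$---and observing that, since $\R$ is reflexive and $a_1 \in T$, one has $(\Sigma\setminus\{a_1\})\setminus(\Lk(a_1)\setminus\{a_1\}) = \Sigma\setminus\Lk(a_1)$ and $(\Sigma\setminus\{a_1\})\setminus(T\setminus\{a_1\}) = \Sigma\setminus T$---I would compute
\[
B_{\Sigma,p}(\Pyr{a_1}) = \mu_\Sigma(p)\cdot r, \qquad B_{\Sigma,p}(L_T) = \frac{\mu_\Sigma(p)\,\mu_{\Sigma\setminus T}(p)}{\mu_{\Sigma\setminus\{a_1\}}(p)}, \qquad B_{\Sigma,p}(\max(\xi) \subseteq T) = \mu_{\Sigma\setminus T}(p).
\]
Item~\ref{item:2} follows by taking the ratio $B_{\Sigma,p}(H_k) / B_{\Sigma,p}(\max(\xi) \subseteq T)$ and invoking Lemma~\ref{prop:1} in the form $\mu_\Sigma(p)/\mu_{\Sigma\setminus\{a_1\}}(p) = 1 - r$.

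For item~\ref{item:3}, given any admissible tuple $(u_0,\ldots,u_k)$ with $u_i = v_i \cdot a_1$ for $i < k$, the joint conditional probability equals $\mu_\Sigma(p) \prod_{i=0}^k p^{\size{u_i}} / B_{\Sigma,p}(H_k)$ by~\eqref{eq:7}. Substituting the expression for $B_{\Sigma,p}(H_k)$ obtained above, writing $p^{\size{u_i}} = p\cdot p^{\size{v_i}}$ for $i < k$, and recognizing $\mu_{\Sigma\setminus\Lk(a_1)}(p)$ and $\mu_{\Sigma\setminus T}(p)$ as the normalizing constants of $B_{\Sigma\setminus\{a_1\},p}$ conditioned on $\{\max \subseteq \Lk(a_1)\}$ and on $\{\max \subseteq T\}$ respectively (each computed by a further application of~\eqref{eq:5} within $\M_{\Sigma\setminus\{a_1\}}$), the expression factors as a product of $k$ identical marginals of the form described in~\ref{item:3b} times one marginal of the form described in~\ref{item:3c}. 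This factorisation simultaneously delivers the conditional independence of~\ref{item:3a} and identifies each of the individual laws.
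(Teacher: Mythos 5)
Your proposal is correct and follows essentially the same route as the paper's proof: the same bijection from $\Pyr{a_1}^k\times L_T$ onto $\{K=k,\ \max(\xi)\subseteq T\}$ via Proposition~\ref{prop:3}, the same application of Lemma~\ref{lem:3}, and the same evaluation of $B_{\Sigma,p}(\Pyr{a_1})$ and $B_{\Sigma,p}(L_T)$ through~\eqref{eq:5} and~\eqref{eq:11}, with the product form of the conditional joint law yielding point~\ref{item:3}. The only cosmetic difference is that you normalize explicitly by $B_{\Sigma,p}(\max(\xi)\subseteq T)=\mu_{\Sigma\setminus T}(p)$ where the paper argues by proportionality, and you are slightly more careful about the set identities such as $(\Sigma\setminus\{a_1\})\setminus(\Lk(a_1)\setminus\{a_1\})=\Sigma\setminus\Lk(a_1)$.
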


The operational variant of Proposition~\ref{prop:2}, in the form of a reconstruction result, is the following.

\begin{proprime}
Let\/ $\M_\Sigma$ be a trace monoid, let $T$ be a non empty subset of\/~$\Sigma$,
let $a_1\in T$, let $p\in(0,p_\Sigma)$ and let $r=1-\mu_\Sigma(p)/\mu_{\Sigma\setminus\{a_1\}}(p)$. 

Then $p<p_{\Sigma\setminus\{a_1\}}$, and therefore $B_{\Sigma\setminus\{a_1\},p}$ is a well defined probability distribution on~$\M_{\Sigma\setminus\{a_1\}}$\,. Furthermore, $r\in(0,1)$.

Consider the following random variables: $K$~is an integer random variable with geometric law of parameter~$r$; and for every non negative integer~$k$,
conditionally on\/ $\{K=k\}$, the variables $V_0,\dots,V_{k-1},U$ are independent and with values in~$\M_{\Sigma\setminus\{a_1\}}$;
the law of\/ $U$ is~$B_{\Sigma\setminus\{a_1\},p}$ conditionally on $\{\max(U) \subseteq T\}$
and $V_0,\dots,V_{k-1}$ are identically distributed, with law $B_{\Sigma\setminus\{a_1\},p}$ conditionally on $\{\max(V_i)\subseteq\Lk(a_1)\}$. Let finally:
\begin{gather}
\label{eq:21}
\xi=(V_0\cdot a_1)\cdot\ldots\cdot( V_{K-1}\cdot a_1)\cdot U\,.
\end{gather}
Then $\xi$ is distributed according to~$B_{\Sigma,p}$\,conditionally on $\{\max(\xi) \subseteq T\}$.
\end{proprime}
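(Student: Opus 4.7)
The plan is to verify, for every $x\in\M_\Sigma$ with $\max(x)\subseteq T$, that the random trace $\xi$ built in~\eqref{eq:21} satisfies $\pr(\xi=x)=B_{\Sigma,p}(\{x\}\mid\max(\xi)\subseteq T)$. First I dispose of the two preliminary claims. By~\eqref{eq:19} applied with $\emptyset\subseteq\{a_1\}$, one has $p<p_\Sigma\leqslant p_{\Sigma\setminus\{a_1\}}$, so $B_{\Sigma\setminus\{a_1\},p}$ is well defined. For $r\in(0,1)$: using the right-hand expression of~\eqref{eq:16}, $r=p\,\mu_{\Sigma\setminus\Lk(a_1)}(p)/\mu_{\Sigma\setminus\{a_1\}}(p)$ is a ratio of strictly positive reals (each $\mu_{\Sigma\setminus\U}(p)$ is positive, since by~\eqref{eq:19} the value $p$ lies strictly below $p_{\Sigma\setminus\U}$), and $1-r=\mu_\Sigma(p)/\mu_{\Sigma\setminus\{a_1\}}(p)$ is positive for the same reason. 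The same positivity ensures that the conditioning events $\{\max(\cdot)\subseteq T\}$ and $\{\max(\cdot)\subseteq\Lk(a_1)\}$ have positive probability under $B_{\Sigma\setminus\{a_1\},p}$, so the conditional laws of $U$ and of the $V_i$ are meaningful.

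Next, I fix $x\in\M_\Sigma$ with $\max(x)\subseteq T$ and apply Proposition~\ref{prop:3} with respect to $a_1$: this yields the unique decomposition $x=u_0\cdot\ldots\cdot u_{k-1}\cdot u_k$ with $k=\size{x}{a_1}$, $u_i=v_i\cdot a_1\in\Pyr{a_1}$ for $i<k$, and $u_k\in\M_{\Sigma\setminus\{a_1\}}$; moreover $\max(u_k)\subseteq T$ by~\eqref{eq:14}. Uniqueness of the decomposition means that $\xi=x$ if and only if $K=k$, $V_i=v_i$ for all $i<k$, and $U=u_k$. Combining the geometric law of $K$ with the conditional independence of $V_0,\ldots,V_{k-1},U$ given $\{K=k\}$, the probability $\pr(\xi=x)$ factors as
\[
(1-r)\,r^k\cdot\prod_{i=0}^{k-1}\frac{\mu_{\Sigma\setminus\{a_1\}}(p)\,p^{\size{v_i}}}{B_{\Sigma\setminus\{a_1\},p}(\max(\cdot)\subseteq\Lk(a_1))}\cdot\frac{\mu_{\Sigma\setminus\{a_1\}}(p)\,p^{\size{u_k}}}{B_{\Sigma\setminus\{a_1\},p}(\max(\cdot)\subseteq T)}.
\]

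The last step is algebraic simplification. Applying~\eqref{eq:5} inside the sub-monoid $\M_{\Sigma\setminus\{a_1\}}$, and using $a_1\in\Lk(a_1)$ together with $a_1\in T$ to identify the relevant set differences $(\Sigma\setminus\{a_1\})\setminus\Lk(a_1)=\Sigma\setminus\Lk(a_1)$ and $(\Sigma\setminus\{a_1\})\setminus T=\Sigma\setminus T$, one obtains $B_{\Sigma\setminus\{a_1\},p}(\max(\cdot)\subseteq\Lk(a_1))=\mu_{\Sigma\setminus\Lk(a_1)}(p)$ and $B_{\Sigma\setminus\{a_1\},p}(\max(\cdot)\subseteq T)=\mu_{\Sigma\setminus T}(p)$. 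Substituting these expressions and the identities $r=p\,\mu_{\Sigma\setminus\Lk(a_1)}(p)/\mu_{\Sigma\setminus\{a_1\}}(p)$ and $(1-r)\mu_{\Sigma\setminus\{a_1\}}(p)=\mu_\Sigma(p)$ of Lemma~\ref{prop:1}, the $\mu_{\Sigma\setminus\Lk(a_1)}(p)^k$ and $\mu_{\Sigma\setminus\{a_1\}}(p)^{k+1}$ factors cancel, the powers of $p$ combine into $p^{\size{x}}$ (using $\size{v_i}=\size{u_i}-1$ and $\sum_{i<k}\size{u_i}+\size{u_k}=\size{x}$), and one is left with $\pr(\xi=x)=\mu_\Sigma(p)\,p^{\size{x}}/\mu_{\Sigma\setminus T}(p)$. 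Since $B_{\Sigma,p}(\{x\})=\mu_\Sigma(p)\,p^{\size{x}}$ by~\eqref{eq:7} and $B_{\Sigma,p}(\max(\xi)\subseteq T)=\mu_{\Sigma\setminus T}(p)$ by~\eqref{eq:5}, this is exactly $B_{\Sigma,p}(\{x\}\mid\max(\xi)\subseteq T)$. Finally, $\max(\xi)\subseteq T$ holds almost surely by~\eqref{eq:14}.

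The main obstacle is purely notational bookkeeping in this final paragraph: applying~\eqref{eq:5} inside the sub-monoid $\M_{\Sigma\setminus\{a_1\}}$ and correctly tracking the set differences when $a_1$ is removed from both $\Lk(a_1)$ and $T$. Lemma~\ref{lem:3} offers an alternative, more structural route that packages the injectivity provided by Proposition~\ref{prop:3} into a clean product formula for $B_{\Sigma,p}$ on the fibers of the decomposition, but the direct computation above is equivalent and equally transparent.
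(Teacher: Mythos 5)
Your proof is correct and follows essentially the same route as the paper: the unique decomposition of Proposition~\ref{prop:3}, the evaluation of the normalizing constants via~\eqref{eq:5} inside $\M_{\Sigma\setminus\{a_1\}}$, and the two expressions for $r$ from Lemma~\ref{prop:1} are exactly the ingredients the paper uses. The only difference is one of direction---you verify the law of the synthesized $\xi$ pointwise, whereas the paper computes the joint law of $(K,U_0,\dots,U_K)$ under $B_{\Sigma,p}(\cdot\mid\max(\xi)\subseteq T)$ (Proposition~\ref{prop:2}, via Lemma~\ref{lem:3}) and reads off Proposition~\ref{prop:2}' as a corollary; since the decomposition is a bijection, these amount to the same computation.
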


Observe that the real $r$ defined in the statement of Proposition~\ref{prop:2}' is indeed the same real than the one defined in Proposition~\ref{prop:2}, thanks to Lemma~\ref{prop:1}.

\begin{proof}
Since Proposition~\ref{prop:2}' is a corollary of Proposition~\ref{prop:2},
we focus on proving the latter result only.

Let $k$ be a non negative integer, let $L_0=\dots=L_{k-1}=\Pyr{a_1}$, $L_k=\{x\in\M_{\Sigma\setminus\{a_1\}}\tq\max(x)\subseteq T\}$, and $L=L_0\times\dots\times L_{k}$\,. Proposition~\ref{prop:3} implies that the mapping $\varphi:L\to\M$ defined by $\varphi(\ell_0,\dots,\ell_k)=\ell_0\cdot\ldots\cdot \ell_k$ is a bijection from $L$ onto the subset $\U_k$ of $\M_\Sigma$ defined by:
\begin{gather*}
\U_k=\bigl\{\xi\in\M_\Sigma\tq  K(\xi)= k\text{ and }\max(\xi)\subseteq T\bigr\}.
\end{gather*}

By the definition~\eqref{eq:7} of~$B_{\Sigma,p}$\,, the characterization~\eqref{eq:11} of~$\Pyr{a_1}$, the summation formula~\eqref{eq:5},  and the value for $r=B_{\Sigma,p}\bigl(\size{\xi}{a_1}>0\bigr)$ found in Lemma~\ref{prop:1}, we have for every integer $i\in\{0,\dots,k-1\}$:
\begin{align*}
B_{\Sigma,p}(L_i)&=\mu_\Sigma(p)\cdot\Bigl(\sum_{{z\in\M_{\Sigma\setminus\{a_1\}}\tq
\max(z)\subseteq\Lk(a_1)}}p^{\size{z\cdot a_1}}\Bigr)\\
&=\mu_\Sigma(p)\frac{p\mu_{\Sigma\setminus\Lk(a_1)}(p)}{\mu_{\Sigma\setminus\{a_1\}}(p)}=r\mu_\Sigma(p)\,,
\\
B_{\Sigma,p}(L_k)&=\mu_\Sigma(p)\cdot\frac{\mu_{\Sigma\setminus T}(p)}{\mu_{\Sigma\setminus\{a_1\}}(p)}\,.
\end{align*}

Therefore, applying Lemma~\ref{lem:3}:
\begin{align}
\label{eq:27}
  B_{\Sigma,p}(\U_k)&=\mu_\Sigma(p)^{-k}\prod_{i=0}^kB_{\Sigma,p}(L_i)=\mu_\Sigma(p)\frac{\mu_{\Sigma\setminus T}(p)}{\mu_{\Sigma\setminus\{a_1\}}(p)}r^k\,.
\end{align}
Since $B_{\Sigma,p}\bigl(K=k\;\big|\;\max(\xi)\subseteq T\bigr)$ is proportional to~$B_{\Sigma,p}(\U_k)$, the result of point~\ref{item:2} in Proposition~\ref{prop:2} follows.

For proving point~\ref{item:3} of Proposition~\ref{prop:2}, we  fix an integer $k\geq0$.  On $\bigl\{\max(\xi)\subseteq T\bigr\}\cap\{K=k\}$, the variable $U_K$ only takes values $u_k\in\M_{\Sigma\setminus\{a_1\}}$ such that $\max(u_k)\subseteq T$ according to Proposition~\ref{prop:3}.   The latter proposition also implies, for any $u_0,\dots,u_{k-1}\in\Pyr{a_1}$ and $u_k\in\M_{\Sigma\setminus\{a_1\}}$ such that $\max(u_k)\subseteq T$\,:
\begin{align*}
B_{\Sigma,p}(U_0=u_0,\dots,U_K=u_k\;\big|\;\U_k\bigr)&=\Bigl(\frac{\mu_\Sigma(p)}{B_{\Sigma,p}(\U_k)}\Bigr)\cdot p^{\size{u_0}}\cdot\ldots\cdot p^{\size{u_k}}\,.
\end{align*}

The above conditional law has a product form, which shows that the $U_i$ are independent. It also shows that the laws of $U_i$ for $i\in\{0,\dots,k-1\}$, which all have the same support, are all proportional to~$p^{\size{u_i}}$, hence $U_0,\dots,U_{k-1}$ are identically distributed. It follows that the random variables $V_0,\dots,V_{k-1}$ are also independent and identically distributed, with law $B_{\Sigma,p}\bigl(V_i=v_i\;|\;\U_k\bigr)$ proportional to~$p^{\size{v_i}}$. Since $B_{\Sigma\setminus\{a_1\},p}\bigl(v_i\;\big|\;\max(v_i)\subseteq\Lk(a_1)\bigr)$ has the same support as $V_i$ and is also proportional to~$p^{\size{v_i}}$, both laws coincide. An analogous argument applies to~$U_k$\,, completing the proof of point~\ref{item:3} of Proposition~\ref{prop:2}.
\end{proof}

\subsection{Random generation of traces}
\label{sec:rand-gener-trac}

We can now use the statement of Proposition~\ref{prop:2}' to start building random algorithms. The first idea is to produce random elements of $\M_\Sigma$ distributed according
to a target distribution~$B_{\Sigma,p}$ conditionally on constraints of the form $\{\xi \in \M_\Sigma \tq \max(\xi) \subseteq T\}$,
provided that we can use a random algorithm able to produce elements of $\M_{\Sigma\setminus\{a_1\}}$ distributed according to~$B_{\Sigma\setminus\{a_1\},p}$
conditionally on similar constraints.

\begin{lemma}
  \label{cor:2}
Let\/ $\M_\Sigma$ be a trace monoid, let $T$ be a non empty subset of\/~$\Sigma$, let $a_1\in T$, and let $p\in(0,p_\Sigma)$.
Assume given a random algorithm $\A$ that, when given as input a subset $X$ of\/~$\Sigma$,
outputs an element $\xi\in\M_{\Sigma\setminus\{a_1\}}$ distributed
according to the probability distribution $B_{\Sigma\setminus\{a_1\},p}(\cdot \mid \max(\xi) \subseteq X)$\,.

Then Algorithm~\ref{algowfdoihjw} described below in pseudo-code outputs with probability~$1$ an element $\xi\in\M_\Sigma$
distributed according to the probability distribution~$B_{\Sigma,p}$ conditionally on $\{\max(\xi) \subseteq T\}$\,.

\begin{algorithm}
\caption{\quad Outputs $\xi\in\M_\Sigma$ distributed according to $B_{\Sigma,p}( \cdot \mid \max(\xi) \subseteq T)$}
\label{algowfdoihjw}
\begin{algorithmic}[1]
\Require{---}\Comment{No input}
\State
$r\gets{1-\mu_\Sigma(p)/\mu_{\Sigma\setminus\{a_1\}}(p)}$\Comment{Needed parameter}
\State
$K\gets{\mathcal{G}(r)}$\Comment{Random integer with a geometric law}
\State
$\xi\gets\unit$\Comment{Initialization}
\For{$i=0$ to $K-1$}
\State
$v\gets\text{output of $\A(\Lk(a_1))$}$
\State
$\xi\gets{\xi\cdot v\cdot a_1}$\Comment{$v$ corresponds to $V_i$ in~\eqref{eq:21}}
\EndFor
\State $u\gets\text{output of $\A(T)$}$\Comment{$u$ corresponds to $U$ in~\eqref{eq:21}}
\State
$\xi\gets \xi\cdot u$
\State
\Return{$\xi$}
\end{algorithmic}
\end{algorithm}
\end{lemma}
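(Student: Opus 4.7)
The plan is to show that Algorithm~\ref{algowfdoihjw} is a literal translation into pseudo-code of the sampling recipe encoded in Proposition~\ref{prop:2}'. Once the two are put side by side, correctness is immediate from that reconstruction result, and there is essentially no calculation left to do.

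First, I would check that the parameters computed or used by the algorithm are well defined and match the hypotheses of Proposition~\ref{prop:2}'. The real $r = 1 - \mu_\Sigma(p)/\mu_{\Sigma\setminus\{a_1\}}(p)$ on line~1 is the very quantity appearing in Proposition~\ref{prop:2}', and it lies in $(0,1)$ by that proposition. Moreover, the inclusion $\{a_1\}\subseteq\Sigma$ together with~\eqref{eq:19} yields $p < p_{\Sigma\setminus\{a_1\}}$, so that $B_{\Sigma\setminus\{a_1\},p}$ is a well defined multiplicative probability on $\M_{\Sigma\setminus\{a_1\}}$\,. Consequently, the two conditional laws $B_{\Sigma\setminus\{a_1\},p}(\cdot \mid \max(\cdot)\subseteq\Lk(a_1))$ and $B_{\Sigma\setminus\{a_1\},p}(\cdot \mid \max(\cdot)\subseteq T)$ that $\A$ is assumed to sample from are themselves well defined (note that $a_1 \in T$ implies $T \cap (\Sigma\setminus\{a_1\})$ yields a legitimate constraint within $\Sigma\setminus\{a_1\}$, and similarly for $\Lk(a_1)$).

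Next, I would identify the random variables produced by the algorithm with those of Proposition~\ref{prop:2}'. Line~2 draws $K$ with geometric law of parameter~$r$. Each successive call to $\A(\Lk(a_1))$ on line~5 produces, by assumption on $\A$ and by independence of the calls, an i.i.d.\ sample $V_0,\dots,V_{K-1}$ from $B_{\Sigma\setminus\{a_1\},p}$ conditioned on $\{\max(\cdot)\subseteq\Lk(a_1)\}$; the single call $\A(T)$ on line~7 yields $U$, independent from the $V_i$, distributed according to $B_{\Sigma\setminus\{a_1\},p}$ conditioned on $\{\max(\cdot)\subseteq T\}$. The joint distribution of $(K, V_0,\dots,V_{K-1}, U)$ thus matches exactly the hypotheses of Proposition~\ref{prop:2}'. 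Lines~4--8 then assemble the element $\xi = (V_0 \cdot a_1) \cdot \ldots \cdot (V_{K-1} \cdot a_1) \cdot U$, which is literally the expression~\eqref{eq:21}. Applying Proposition~\ref{prop:2}' gives the claim.

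The only genuinely separate point to address is the almost sure termination, which I flag as the ``main obstacle'' only because it is the sole property not supplied directly by Proposition~\ref{prop:2}'. Since a geometric random variable is finite with probability~$1$, the for-loop on lines~4--6 executes finitely many iterations almost surely; each call to $\A$ is assumed to return a value, and the concatenations on lines~6 and~8 are single operations. Hence the algorithm produces a finite output $\xi$ with probability~$1$, concluding the proof.
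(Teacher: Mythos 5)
Your proposal is correct and follows the same route as the paper: the paper's own proof simply observes that the algorithm is a direct implementation of the sampling scheme in Proposition~\ref{prop:2}' and concludes immediately. Your additional checks (well-definedness of the conditional laws via~\eqref{eq:19}, the identification of the algorithm's random variables with those of~\eqref{eq:21}, and almost-sure termination from the finiteness of the geometric variable) are all sound elaborations of that one-line argument.
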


\begin{proof}
%
The fact that Algorithm~\ref{algowfdoihjw} outputs an element $\xi\in\M_\Sigma$ distributed according to
$B_{\Sigma,p}(\cdot \mid \max(\xi) \subseteq T)$ is an immediate consequence of Proposition~\ref{prop:2}'.
%
%
\end{proof}

We are now ready to produce a recursive algorithm that outputs random elements in $\M_\Sigma$ distributed according to $B_{\Sigma,p}$ for $p\in(0,p_\Sigma)$.

\begin{theorem}
  \label{thr:1}
Let\/ $\M_\Sigma$ be a trace monoid and let $p\in(0,p_\Sigma)$.
Assume that, for all subsets $X$ of\/~$\Sigma$, the real $\mu_X(p)$ has been precomputed.
Algorithm~\ref{algowfdoihjw2}, described in pseudo-code below, and provided with
an input\/~$(\Sigma, \Sigma)$, outputs an element $\xi \in \M_S$ distributed according to~$B_{\Sigma,p}$\,.

We also assume that every function call, variable assignation and multiplication in $\M_\Sigma$
takes a constant number of steps, and that the call to a routine outputting a random integer $X$ takes a number of steps bounded by~$X$.
Then Algorithm~\ref{algowfdoihjw2}, when outputting an element $\xi$ of\/~$\M_\Sigma$\,,
requires the execution of $\mathcal{O}(\size{\Sigma} (\size{\xi}+1))$ steps.

%

\begin{algorithm}
  \caption{\quad Outputs $\xi\in\M_S$ distributed according to $B_{S,p}( \cdot \mid \max(\xi) \subseteq T)$}
\label{algowfdoihjw2}
  \begin{algorithmic}[1]
\Require{Subsets $S$ and $T$ of $\Sigma$}
\If{$S \cap T = \emptyset$}
\State
\Return{$\unit$}
\Else
\State
\textbf{choose} $a_1 \in S \cap T$
\State
$r\gets{1-\mu_S(p)/\mu_{S\setminus\{a_1\}}(p)}$
\State
$K\gets{\mathcal{G}(r)}$\Comment{Random integer with a geometric law}
\State
$\xi\gets\unit$
\For{$i=0$ to $K-1$}
\State
$v\gets\text{output of Algorithm~\ref{algowfdoihjw2} on input $(S \setminus\{a_1\},\Lk(a_1))$}$
\State
$\xi\gets{\xi\cdot v\cdot a_1}$
\EndFor
\State
$u\gets\text{output of Algorithm~\ref{algowfdoihjw2} on input $(S \setminus\{a_1\},T)$}$
\State
$\xi\gets \xi\cdot u$
\State
\Return{$\xi$}
\EndIf
  \end{algorithmic}
\end{algorithm}
\end{theorem}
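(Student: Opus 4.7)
The plan is to establish by induction on~$\size{S}$ both the correctness of the output distribution and the complexity bound, in a slightly stronger form that handles the recursion cleanly: on any input $(S,T)$, the algorithm outputs an element of~$\M_S$ distributed according to $B_{S,p}(\cdot\mid\max(\xi)\subseteq T)$ using at most $C(\size{S}+1)(\size{\xi}+1)$ steps, for an absolute constant~$C$. The theorem is then the specialization to $(S,T)=(\Sigma,\Sigma)$, where the conditioning is vacuous and the bound becomes $\mathcal{O}(\size{\Sigma}(\size{\xi}+1))$.

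\emph{Correctness.} The base case $S\cap T=\emptyset$ is handled by the observation that any non-identity element of~$\M_S$ has at least one maximal element of its heap, necessarily labelled by a letter of~$S$; hence the event $\{\max(\xi)\subseteq T\}$ forces $\xi=\unit$, the correct conditional law is the Dirac at~$\unit$, and returning $\unit$ is correct. In the inductive step, iterating formula~\eqref{eq:19} gives $p<p_X$ for every $X\subseteq\Sigma$, so all the relevant measures $B_{X,p}$ are well defined. By the inductive hypothesis, the recursive calls on $(S\setminus\{a_1\},\Lk(a_1))$ and $(S\setminus\{a_1\},T)$ produce samples matching the hypotheses of Lemma~\ref{cor:2} (applied with the ambient monoid~$\M_S$ in the role of~$\M_\Sigma$), and that lemma certifies the claimed distribution of the returned~$\xi$.

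\emph{Complexity.} Denote by $T(S,T)$ the number of steps performed. The base case gives $T(S,T)=\mathcal{O}(1)$ and $\size{\xi}=0$, which is compatible with the claimed bound. In the inductive step, the precomputed values $\mu_X(p)$ make $r$ computable in $\mathcal{O}(1)$, drawing $K$ costs $\mathcal{O}(K+1)$ by the stated assumption on the random integer routine, and the body performs $\mathcal{O}(K+1)$ further work outside of recursive calls. Applying the induction hypothesis to the $K+1$ recursive calls and using $\size{\xi}=\sum_{i=0}^{K-1}(\size{V_i}+1)+\size{U}$, I obtain, for some absolute constant~$C_0$,
\begin{align*}
T(S,T)\;\leqslant\;C_0(K+1)+C\,\size{S}\,(\size{\xi}+1).
\end{align*}
The crucial observation is $K\leqslant\size{\xi}$ (each loop iteration writes one new occurrence of~$a_1$ into the output), hence $K+1\leqslant\size{\xi}+1$; choosing $C\geqslant C_0$ closes the induction as $T(S,T)\leqslant C(\size{S}+1)(\size{\xi}+1)$.

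\emph{Main obstacle.} The only delicate step is the complexity bookkeeping. A naive bound of the form $\size{S}\cdot(\size{\xi}+1)$ would fail in the base case (one still performs $\mathcal{O}(1)$ work when $\size{S}=0$) and, more importantly, would fail to absorb the $\mathcal{O}(K+1)$ overhead paid at each level of the recursion. Both defects are resolved simultaneously by the strengthened bound $(\size{S}+1)(\size{\xi}+1)$ together with the inequality $K\leqslant\size{\xi}$, which charges the per-level overhead to the single letter~$a_1$ produced at that level.
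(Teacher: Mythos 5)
Your proof is correct and follows essentially the same route as the paper's: correctness by induction on $S$ via Lemma~\ref{cor:2}, and the complexity bound by induction with the strengthened estimate $(\size{S}+1)(\size{\xi}+1)$, closed using $\sum_i\ell_i=\size{\xi}-K$ and $K\leqslant\size{\xi}$. The only (welcome) additions are the explicit justification of the base case and the remark that $p<p_X$ for all $X\subseteq\Sigma$ via~\eqref{eq:19}, which the paper leaves implicit.
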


\begin{proof}
We prove by induction on $S$ (for the inclusion ordering) that Algorithm~\ref{algowfdoihjw2},
on input $(S,T)$, outputs a random element $\xi_i\in\M_S$ distributed according to $B_{S,p}(\cdot \mid \max(\xi) \subseteq T)$.

The result is trivial for $S = \emptyset$ and for $S \cap T = \emptyset$. Assuming that it holds for all strict subsets of~$S$,
let us apply Lemma~\ref{cor:2} with $S$ in place of $\Sigma$ and with Algorithm~\ref{algowfdoihjw2} on input $(S\setminus\{a_1\},X)$ in place of $\A$ on input~$X$.
Doing so, we prove that $\A(S,T)$ outputs a random element $\xi\in\M_S$ distributed according to~$B_{S,p}\bigl(\cdot \mid \max(\xi) \in T\bigr)$\,. 

The distributions $B_{\Sigma,p}$ and $B_{\Sigma,p}\bigl(\cdot \mid \max(\xi) \subseteq \Sigma\bigr)$
coincide, which completes the proof.

Furthermore, let $f(n,k)$ be the maximal number of steps executed by Algorithm~\ref{algowfdoihjw2} on input $(S,T)$ when outputting an element $\xi$,
where $n = \size{S}$ and $k = \size{\xi}$. By assumption, executing Algorithm~\ref{algowfdoihjw2} on input $(S,T)$ drawing an integer~$K$,
requires at most $\kappa (1 + K) + \sum_{i=0}^K f(n,\ell_i)$ steps, where $\kappa$ is a constant and where $\sum_{i=0}^K \ell_i = k - K$.
Hence, we prove by induction on $n$ that $f(n,k) \leqslant \kappa (n+1)(k+1)$.

Indeed, for $n = 0$, we have $f(n,k) \leqslant \kappa$ independently of $k$,
which proves the complexity bound of Theorem~\ref{thr:1} for $\Sigma = \emptyset$.
Then, provided that our induction hypothesis holds for $n-1$, then $K \leqslant k$, and therefore
\begin{gather*}
\begin{aligned}
f(n,k) & \leqslant \kappa(1+K) + \sum_{i=0}^K \kappa n(\ell_i+1) = \kappa(1+K) + \kappa n(k - K) + \kappa n (K+1) \\
& \leqslant \kappa n(k+1) + \kappa(1+K) \leqslant \kappa (n+1)(k+1),
\end{aligned}
\end{gather*}
which proves our induction hypothesis for $n$, thereby completing the proof of Theorem~\ref{thr:1}.
\end{proof}

\section{Random generation of infinite traces}
\label{sec:rand-gener-infin}

Let $\M_\Sigma=\M(\Sigma,\R)$ be a trace monoid and let $\BM_\Sigma$ be its boundary (see Section~\ref{sec:unif-meas-bound}). In the remaining of this section, we assume that $\M_\Sigma$ is \emph{irreducible}, meaning that the graph $(\Sigma,\R)$ is connected. Indeed, if it is not the case, then the different connected components $\Sigma_1,\dots,\Sigma_k$ induce submonoids $\M_{\Sigma_1},\dots,\M_{\Sigma_k}$\,. Then, by construction of~$\M_\Sigma$\,, one has $a\cdot b=b\cdot a$ for any pair $(a,b)\in\Sigma_i\times\Sigma_j$ with $i\neq j$. It follows that $\M_\Sigma$ is isomorphic to the direct product $\M_{\Sigma_1}\times\dots\times\M_{\Sigma_k}$\,. In other words, the irreducible components of $\M_\Sigma$ do not interact with each other, which justifies the reduction to the irreducible case---see~\cite{abbes15conf} for a precise description of the uniform measure on $\BM_\Sigma$ if $\M_\Sigma$ is not irreducible.

Hence, we fix an irreducible trace monoid~$\M_\Sigma$\,. To shorten the notations, we denote by $\B$ the uniform measure on~$\BM_\Sigma$\,. Recall from Section~\ref{sec:unif-meas-bound} that $\B$ is characterized by its values on visual cylinders: $\B\bigl(\up x\bigr)=p_\Sigma^{\size x}$ for all $x\in\M_\Sigma$\,, where $\up x=\{\xi\in\BM_\Sigma\tq x\leq\xi\}$.

For the uniform generation of infinite traces, we need a reconstruction result analogous to Proposition~\ref{prop:2}', but that would apply to~$\B$. For this, pick $a_1\in\Sigma$. We extend the partial mapping $V_{a_1}:\M_\Sigma\to\Pyr{a_1}$ introduced in Section~\ref{sec:rand-gener-finite}. Let $\xi\in\BM_\Sigma$\,, and let $\seq xn$ be a non decreasing sequence in $\M_\Sigma$ such that $\xi=\bigvee_{n\geqslant0}x_n$\,. We symbolically write $a_1\in\xi$ if $\sup_{n\geqslant0}\size{x_n}{a_1}>0$, and this property does not depend on the chosen sequence~$\seq xn$. If $a_1\in\xi$, then the sequence $\bigl(V_{a_1}(x_n)\bigr)_{n\geqslant q}$ is well defined for $q$ large enough, and it is constant. We define:
\begin{gather}
\label{eq:23}
V_{a_1}(\xi)=
V_{a_1}(x_q)\,,
\end{gather}
which is an $a_1$-pyramidal element of $\M_\Sigma$ independent of the chosen sequence $\seq xn$ and of the integer $q$ chosen large enough.

Since $\M_\Sigma$ is assumed to be irreducible, we know by \cite[Prop.~5.6]{abbes16} that $V_{a_1}$ is defined on $\BM_\Sigma$ with $\B$-probability~$1$, and the law of $V_{a_1}$ is given by:
\begin{gather}
  \label{eq:24}
\forall v\in\Pyr{a_1}\qquad\B\bigl(V_{a_1}=v\bigr)=p_\Sigma^{\size  v}\,.
\end{gather}

Furthermore, by \cite[Th.~6.1]{abbes17}, we have the following reconstruction result for~$\B$: \emph{let\/ $(V_n)_{n\geqslant1}$ be
an \iid\ sequence of random variables in $\Pyr{a_1}$ with law $R(v)=p_\Sigma^{\size v}$ for $v\in\Pyr{a_1}$\,, and
let $\xi=\bigvee_{n\geqslant1}(V_1\cdot\ldots\cdot V_n)$\,; then $\xi$ is an infinite trace  distributed according to~$\B$.}

This reconstruction result tells us that if we can simulate the law of~$V_{a_1}$\,, then we can simulate the law $\B$ by simply concatenating an \iid\ sample of~$V_{a_1}$\,.
Our last task is thus to simulate the law~\eqref{eq:24}, and for this we Lemma~\ref{lem:2} below.

It must be noted that a more naive approach, consisting for example in concatenating \iid\ samples of elements in $\M_\Sigma$ distributed according to some distribution $B_{\Sigma,p}$ with $p<p_\Sigma$\,, would not work in general: see a counter-example in~\cite[\S~6.1.2]{abbes17}.

\begin{lemma}
  \label{lem:2}
Let\/ $\M_\Sigma$ be an irreducible trace monoid, and let $a_1\in\Sigma$. Then $p_{\Sigma}<p_{\Sigma\setminus\{a_1\}}$ and
$B_{\Sigma\setminus\{a_1\},p_\Sigma}$ is thus a well defined probability distribution on~$\M_{\Sigma\setminus\{a_1\}}$\,. 

If $\A$ is a random algorithm outputting a random element of\/ $\M_{\Sigma\setminus\{a_1\}}$
distributed according to~$B_{\Sigma\setminus\{a_1\},p_\Sigma}$\, conditionally on $\{\max(\xi) \subseteq \Lk(a_1)\}$,
then Algorithm~\ref{pojiqpsoij} described below in pseudo-code outputs with probability~$1$ an element $V\in\Pyr{a_1}$
with the same law as~$V_{a_1}$ given in~\eqref{eq:24}.
\begin{algorithm}
\caption{Outputs $V\in\Pyr{a_1}$ with the law of $V_{a_1}$}
\label{pojiqpsoij}
\begin{algorithmic}[1]
\Require{---}\Comment{No input}
\State
$\xi\gets\text{output of $\A$}$
\State
\Return $\xi\cdot a_1$
\end{algorithmic}
\end{algorithm}
\end{lemma}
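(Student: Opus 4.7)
The plan is to prove the two claims of the lemma separately. The strict inequality $p_\Sigma < p_{\Sigma\setminus\{a_1\}}$ is a standard consequence of irreducibility, known to hold for irreducible trace monoids—see, e.g., \cite[Prop.~5.6]{abbes16}. Indeed, \eqref{eq:19} already provides the weak inequality $p_\Sigma \leqslant p_{\Sigma\setminus\{a_1\}}$; ruling out equality is precisely what the Perron--Frobenius-style analysis for irreducible $(\Sigma,\R)$ delivers. Once this is granted, $p_\Sigma \in (0,p_{\Sigma\setminus\{a_1\}})$, so~\eqref{eq:7} defines $B_{\Sigma\setminus\{a_1\},p_\Sigma}$ as a bona fide probability distribution on $\M_{\Sigma\setminus\{a_1\}}$\,.

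For the law of the output $V$, fix $v \in \Pyr{a_1}$ and use~\eqref{eq:11} to write $v = z \cdot a_1$ with a unique $z \in \M_{\Sigma\setminus\{a_1\}}$ satisfying $\max(z) \subseteq \Lk(a_1)$. The algorithm outputs $v$ if and only if $\A$ outputs $z$, so
\[
\pr(V = v) = \frac{B_{\Sigma\setminus\{a_1\},p_\Sigma}\bigl(\{z\}\bigr)}{B_{\Sigma\setminus\{a_1\},p_\Sigma}\bigl(\{x\in\M_{\Sigma\setminus\{a_1\}}\tq \max(x) \subseteq \Lk(a_1)\}\bigr)}.
\]
The numerator equals $\mu_{\Sigma\setminus\{a_1\}}(p_\Sigma)\,p_\Sigma^{\size z}$ by~\eqref{eq:7}. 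For the denominator, I apply~\eqref{eq:5} inside $\M_{\Sigma\setminus\{a_1\}}$ at $p = p_\Sigma$ with $\U = \Lk(a_1)\setminus\{a_1\}$; noting that $(\Sigma\setminus\{a_1\})\setminus\U = \Sigma\setminus\Lk(a_1)$, the denominator simplifies after cancellation to $\mu_{\Sigma\setminus\Lk(a_1)}(p_\Sigma)$.

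The final step invokes~\eqref{eq:28} specialized at the critical value $p_\Sigma$: since $\mu_\Sigma(p_\Sigma) = 0$, the identity reduces to $\mu_{\Sigma\setminus\{a_1\}}(p_\Sigma) = p_\Sigma\,\mu_{\Sigma\setminus\Lk(a_1)}(p_\Sigma)$. Substituting into the previous formula yields $\pr(V = v) = p_\Sigma \cdot p_\Sigma^{\size z} = p_\Sigma^{\size v}$, which is exactly the target law~\eqref{eq:24}. The main obstacle is the strict inequality, which genuinely requires the irreducibility assumption and lies outside the elementary toolbox of Section~\ref{sec:comb-trace-mono}; by contrast, the distribution identity is a clean algebraic consequence of~\eqref{eq:5} and~\eqref{eq:28} specialized at~$p_\Sigma$.
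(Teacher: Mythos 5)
Your proposal is correct, and for the distributional claim it goes a step further than the paper. The paper's own proof uses the same decomposition $v=z\cdot a_1$ with $z\in\M_{\Sigma\setminus\{a_1\}}$, $\max(z)\subseteq\Lk(a_1)$, but then stops at observing that the output law is \emph{proportional} to $p_\Sigma^{\size{v}}$ on $\Pyr{a_1}$ and concludes by the fact that two probability distributions with the same support that are proportional must coincide---this implicitly leans on the already-cited fact that \eqref{eq:24} sums to~$1$. You instead compute the normalizing constant explicitly: the denominator $\mu_{\Sigma\setminus\Lk(a_1)}(p_\Sigma)$ via \eqref{eq:5} applied inside $\M_{\Sigma\setminus\{a_1\}}$ (legitimate once $p_\Sigma<p_{\Sigma\setminus\{a_1\}}$ is known), and then the cancellation via \eqref{eq:28} evaluated at the root, $\mu_{\Sigma\setminus\{a_1\}}(p_\Sigma)=p_\Sigma\,\mu_{\Sigma\setminus\Lk(a_1)}(p_\Sigma)$. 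This buys a self-contained re-derivation of the normalization $\sum_{v\in\Pyr{a_1}}p_\Sigma^{\size{v}}=1$, at the cost of a slightly longer computation; the paper's shortcut is cleaner but less informative. Two minor remarks: the paper attributes the strict inequality $p_\Sigma<p_{\Sigma\setminus\{a_1\}}$ to \cite[Th.~4.5]{abbes17} rather than \cite[Prop.~5.6]{abbes16} (both are external to the paper, so this is only a citation slip), and your argument tacitly uses $\mu_{\Sigma\setminus\Lk(a_1)}(p_\Sigma)>0$, which does follow from $p_\Sigma<p_{\Sigma\setminus\{a_1\}}\leqslant p_{\Sigma\setminus\Lk(a_1)}$ by \eqref{eq:19} but deserves a word.
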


\begin{proof}
The strict inequality $p_\Sigma<p_{\Sigma\setminus\{a_1\}}$ relies on the irreducibility assumption on~$\M_\Sigma$\,, and is a consequence of~\cite[Th.~4.5]{abbes17}.
%
To compute the law of the output of Algorithm~\ref{pojiqpsoij}, let $v\in\Pyr{a_1}$.
Then $v=z\cdot a_1$ for a unique $z\in\M_{\Sigma\setminus\{a_1\}}$ with $\max(z)\subseteq\Lk(a_1)$. The probability of outputting $v$ is proportional to~$p_\Sigma^{\size{z}}$, and thus to~$p_\Sigma^{\size{v}}$.

Hence, the distribution laws of the output of Algorithm~\ref{pojiqpsoij} and of the variable~$V_{a_1}$ given in~\eqref{eq:24}
have the same support, namely~$\Pyr{a_1}$, and are proportional to each other.
Consequently, they are equal to each other, which completes the proof.
%
%
\end{proof}

We are now ready to construct an algorithm achieving our goal of the uniform generation of infinite traces.

\begin{theorem}
  \label{thr:2}
Let\/ $\M_\Sigma$ be an irreducible trace monoid and let $a_1 \in \Sigma$.
Then $p_\Sigma<p_{\Sigma\setminus\{a_1\}}$\, and, if we use $p_\Sigma$ in place of~$p$,
Algorithm~\ref{algowfdoihjw2} is well defined for all inputs $(S,T)$ such that $S \subseteq \Sigma\setminus\{a_1\}$.

Moreover, the following random endless algorithm outputs at its $k^{\text{th}}$ loop an element $\xi_k\in\M_\Sigma$ with the following properties:
\begin{enumerate}
\item\label{item:5} $(\xi_k)_{k\geqslant1}$ is a non decreasing sequence.
\item\label{item:6} The element $\xi=\bigvee_{k\geqslant1}\xi_k$ is an infinite trace distributed according to the uniform measure at infinity.
\item\label{item:7} Under the same assumptions as in Theorem~\ref{thr:1}, these first $k$ loops require the execution of $\mathcal{O}(\size{\Sigma} \size{\xi_k})$ steps overall,
and the average and minimal sizes of\/ $\xi_k$ are linear in~$k$. Hence the algorithm produces in average a constant number of additional elements of\/ $\Sigma$ by unit of time.
\end{enumerate}
\begin{algorithm}
\caption{Outputs approximation of $\xi\in\BM_\Sigma$ distributed according to $\B$}
\label{aslkmalk}
\begin{algorithmic}[1]
\Require{---}\Comment{No input}
\State
$\xi\gets\unit$\Comment{Initialization}
\Repeat
\State\label{repeat-start}
 $V\gets\text{output of Algorithm~\ref{algowfdoihjw2} on input $(\Sigma\setminus\{a_1\},\Sigma\setminus\Lk(a_1))$}$ 
\State
$\xi\gets\xi\cdot V\cdot a_1$
\State\textbf{output} $\xi$\Comment{Writes on a register}\label{repeat-end}
\Until{False}
\end{algorithmic}
\end{algorithm}
\end{theorem}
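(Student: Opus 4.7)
The plan is to assemble Lemma~\ref{lem:2}, Theorem~\ref{thr:1}, and the reconstruction result~\cite[Th.~6.1]{abbes17} quoted just before Lemma~\ref{lem:2}, finishing with a complexity argument. First, Lemma~\ref{lem:2} gives $p_\Sigma<p_{\Sigma\setminus\{a_1\}}$\,. Combined with the monotonicity property~\eqref{eq:19}, this yields $p_\Sigma<p_S$ for every $S\subseteq\Sigma\setminus\{a_1\}$, so $p_\Sigma$ lies in the legitimate range $(0,p_S)$ of the parameter~$p$ used by Algorithm~\ref{algowfdoihjw2} on such inputs. Property~\ref{item:5} follows by inspection of Algorithm~\ref{aslkmalk}, since each loop merely right-concatenates onto~$\xi$.

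For property~\ref{item:6}, I would observe that the body of the \textbf{repeat} loop is exactly an instance of Algorithm~\ref{pojiqpsoij} of Lemma~\ref{lem:2}: Theorem~\ref{thr:1}, applied with parameter~$p_\Sigma$ and alphabet~$\Sigma\setminus\{a_1\}$, ensures that the inner call to Algorithm~\ref{algowfdoihjw2} satisfies the hypothesis imposed on the auxiliary routine~$\A$ in Lemma~\ref{lem:2}. The lemma then asserts that the block $W_i\in\Pyr{a_1}$ appended at iteration~$i$ has the target law $\pr(W_i=v)=p_\Sigma^{\size v}$. Since the successive iterations use independent random bits, the sequence $(W_i)_{i\geqslant1}$ is \iid, and $\xi_k=W_1\cdot\ldots\cdot W_k$ for every $k\geqslant1$. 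The reconstruction theorem~\cite[Th.~6.1]{abbes17} then yields directly that $\xi=\bigvee_{k\geqslant1}\xi_k$ is an infinite trace distributed according to~$\B$.

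For property~\ref{item:7}, Theorem~\ref{thr:1} gives $\mathcal{O}(\size\Sigma(\size{W_i}+1))$ steps per iteration. Since each $W_i$ contains a final occurrence of~$a_1$, one has $\size{W_i}\geqslant1$, so summing over the first~$k$ iterations yields an $\mathcal{O}(\size\Sigma\size{\xi_k})$ bound, and the same inequality $\size{W_i}\geqslant1$ gives $\size{\xi_k}\geqslant k$, which establishes the minimal-size claim. The remaining and technically most delicate point is the linearity of $\esp\size{\xi_k}$, which reduces to the finiteness of $\esp\size{W_1}$\,. Using the characterization~\eqref{eq:11} of $\Pyr{a_1}$ and formula~\eqref{eq:4} applied to the pair $(\Sigma\setminus\{a_1\},\Lk(a_1)\setminus\{a_1\})$, one expresses $\esp\size{W_1}$ in terms of $G_{\Sigma\setminus\{a_1\},\Lk(a_1)\setminus\{a_1\}}(p_\Sigma)$ and its derivative evaluated at~$p_\Sigma$. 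These quantities are finite precisely because the strict inequality $p_\Sigma<p_{\Sigma\setminus\{a_1\}}$ from Lemma~\ref{lem:2} places $p_\Sigma$ strictly inside the common radius of convergence of $G_{\Sigma\setminus\{a_1\}}(X)$ and of its derivative, whence $\esp\size{\xi_k}=k\,\esp\size{W_1}=\Theta(k)$.
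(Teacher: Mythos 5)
Your proof follows the paper's own argument essentially step for step: the strict inequality $p_\Sigma<p_{\Sigma\setminus\{a_1\}}$ from Lemma~\ref{lem:2} (resting on irreducibility), the monotonicity~\eqref{eq:19} to make Algorithm~\ref{algowfdoihjw2} well defined at $p=p_\Sigma$ on sub-alphabets of $\Sigma\setminus\{a_1\}$, the identification of each loop body with Algorithm~\ref{pojiqpsoij} so that the appended blocks are \iid\ with the law~\eqref{eq:24} of~$V_{a_1}$, the reconstruction result of~\cite[Th.~6.1]{abbes17} to conclude that $\xi$ has law~$\B$, and the per-iteration cost bound from Theorem~\ref{thr:1}. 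The only divergence is that you justify the finiteness of $\esp\size{W_1}$ explicitly, via the radius of convergence $p_{\Sigma\setminus\{a_1\}}>p_\Sigma$ of the relevant generating series, whereas the paper's proof leaves this point implicit when asserting that the average size of $\xi_k$ is linear in~$k$; this is a correct and welcome addition.
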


\begin{proof}
As in the proof of Lemma~\ref{lem:2}, the inequality $p_\Sigma<p_{\Sigma\setminus\{a_1\}}$ relies on the irreducibility assumption on~$\M_\Sigma$\,, and follows from~\cite[Th.~4.5]{abbes17}. 

Let $(\xi_k)_{k\geqslant1}$ be the sequence of outputs of Algorithm~\ref{aslkmalk}, and let $\xi=\bigvee_{k\geqslant1}\xi_k$\,.
Then $\xi_k=(V_1\cdot a_1)\cdot\ldots\cdot(V_k\cdot a_1)$, where $V_k$ is the output of the inside \textbf{repeat} block (from line~\ref{repeat-start} to line~\ref{repeat-end}).
By construction the $(V_k\cdot a_1)_{k\geqslant1}$ random variables are \iid, and it follows from Lemma~\ref{lem:2} that their common law is that of~$V_{a_1}$\,.
Hence, the  reconstruction result for $\B$ recalled above implies that $\xi$ is distributed according to~$\B$.

It also follows from Theorem~\ref{thr:1} that every \textbf{repeat} block, when producing an element $V$,
requires executing $\mathcal{O}(\size{\Sigma} (\size{V}+1))$ steps. Hence, producing the element $\xi_k$ requires executing
$\mathcal{O}(\size{\Sigma} \size{\xi_k})$ steps overall.

Moreover, $\size{\xi_k}$ is a sum of $k$ independant \iid random variables $(\size{V_i \cdot a_1})_{1 \leqslant i \leqslant k}$,
all of which are positive integers. This completes the proof.
\end{proof}

Note that in Theorem~\ref{thr:2}, whereas the initial monoid $\M_\Sigma$ is assumed to be irreducible,
the involved submonoid $\M_{\Sigma\setminus\{a_1\}}$ may not be irreducible.
This is not an issue since all the work done in Section~\ref{sec:rand-gener-finite} for the generation of finite traces does not rely on any irreducibility assumption.

One might be concerned by the fact that the sequence $(\xi_k)_{k\geqslant1}$\,, output of Algorithm~\ref{aslkmalk}, has
a particular ``shape'', since it is the concatenation of $a_1$-pyramidal traces. For instance, if one wishes to use it for
parametric estimation or to sample some statistics on traces, the result could \emph{a priori} depend on the choice of~$a_1$\,.
But asymptotically, for a large class of statistics, the result will not depend on the choice of~$a_1$\,;
see a precise justification of this fact in~\cite{abbes16}.

\paragraph*{Generalizations.}
\label{sec:generalizations}

One may be interested in random generation techniques on trace monoids for other measures than the uniform measure. A natural generalization of the uniform measure is the notion of Bernoulli measure, developped in~\cite{abbes15}. Techniques similar to the ones presented above would apply, as it is shown in~\cite{abbes17} for trace monoids with one or zero cycle.

Another generalization might be considered. 
From a system simulation viewpoint, trace monoids lack the crucial notion of state in order to simulate ``real-life'' concurrency systems. However, this is not hopeless. The notion of trace monoid acting on a finite set provides an accurate model of such systems: see for instance how $1$-safe Petri nets fit in this model in~\cite{abbes17:_towar_petri}. A generalization of the techniques developped above is to be expected in this framework.

\section{Distributed implementation of the algorithms}
\label{sec:distr-impl-algor}

When working with concurrent systems, it is important to determine whether algorithms can be implemented in a distributed way, \emph{i.e.}, in a decentralized way.

It is known that the elements of a trace monoid have a normal form, the so-called \emph{Cartier-Foata normal form}; hence every element of $\M_\Sigma$ can be uniquely written as a product of cliques, in such a way that each factor itself is maximal, see~\cite{cartier69} (this corresponds to the greedy normal form for positive braids). The Cartier-Foata normal form can be extended to infinite traces, and then, under the uniform measure at infinity, the factors of the normal form are shown to be a discrete time Markov chain~\cite{abbes15}. Simulating this Markov chain could be an obvious way of generating infinite traces.  But it would \emph{not} be a distributed generation. Indeed, the state space of the chain is finite but huge, since it is the set of cliques of the monoid~$\M_\Sigma$\,, and the computation of the law of each factor cannot be done locally.

On the contrary, in the algorithms presented above, the only elementary operations required are concatenations of elements of the monoid.
We mention without additional details that such 
operations can be performed in a distributed way, hence without any global computation.
See \cite{abbes17} for more details on an implementation of trace monoids through tuples of words.

For some particular trace monoids, it is not necessary to precompute and store the reals $\mu_X(p)$ for all subsets $X$ of $\Sigma$,
and adequately choosing the elements $a_1$ may result in a dramatic decrease in the number of subsets $X$ for which $\mu_X(p)$ must be stored.
In these monoids, the random generation of infinite traces can also be done, as shown in~\cite{abbes17},
with bounds on the execution time similar to the bound given in Theorem~\ref{thr:2}.
These are all the trace monoids $\M(\Sigma,\R)$ such that $(\Sigma,\R)$ has zero or one cycle.

Alternatively, instead of precomputing and storing the reals $\mu_X(p)$ for all subsets $X$ of~$\Sigma$,
we may choose to recompute them on-the-fly whenever needed. In some cases, for instance if $(\Sigma,\R)$
is a graph of tree-width at most~$\kappa$, computing $\mu_X(p)$ for any subset $X$ can be performed in time~$2^\kappa \size{\Sigma}$: in these cases, omitting the precomputation phase and the need
to store the $2^{\size{\Sigma}}$ real numbers $\mu_X(p)$ is feasible, at the cost multiplying by
$2^\kappa \size{\Sigma}$ the number of steps executed in the generation phase of the algorithm.

\bibliographystyle{plain}
\bibliography{biblio}

\end{document}